\theoremstyle{plain} 
\newtheorem{theorem}{Theorem}[section]
\newtheorem{lemma}[theorem]{Lemma}
\newtheorem{corollary}[theorem]{Corollary}
\newtheorem{conjecture}[theorem]{Conjecture}
\theoremstyle{definition}
\newtheorem*{question}{Question}
\theoremstyle{remark}
\renewcommand{\leq}{\leqslant} 
\renewcommand{\geq}{\geqslant}
\newcommand{\parn}[1]{\left(#1\right)}
\newcommand{\ceil}[1]{\left\lceil#1\right\rceil}
\newcommand{\floor}[1]{\left\lfloor#1\right\rfloor}
\newcommand{\bracket}[1]{\left[#1\right]}
\newcommand{\C}{\mathcal{C}}
\newcommand{\E}{\mathcal{E}}
\newcommand{\F}{\mathcal{F}}
\newcommand{\I}{\mathcal{I}}
\newcommand{\M}{\mathcal{M}}
\renewcommand{\O}{\mathcal{O}}
\newcommand{\T}{\mathcal{T}}
\newcommand{\Z}{\mathbb{Z}}
\begin{document}
\title[Lattice paths inside a table]{Lattice paths inside a table: Rows and columns linear combinations}
\author{M. Farrokhi D. G.} 

\keywords{Lattice paths, recurrence relation, linear combination, operator}
\subjclass[2010]{Primary 05A15; Secondary 11B37, 11B83.}
\address{Department of Mathematics, Institute for Advanced Studies in Basic Sciences (IASBS), and the Center for Research in Basic Sciences and Contemporary Technologies, IASBS, Zanjan 66731-45137, Iran}
\email{m.farrokhi.d.g@gmail.com\\farrokhi@iasbs.ac.ir}
\date{}

\begin{abstract}
A lattice path inside the $m\times n$ table $T$ is a sequence $\nu_1,\ldots,\nu_k$ of cells such that $\nu_{j+1}-\nu_j\in\{(1,-1),(1,0),(1,1)\}$ for all $j=1,\ldots,k-1$. The number of lattice paths in $T$ from the first column to the $(x,y)$-cell is written into that cell. We present a precise description of the minimal linear recurrences among rows, columns, and columns sums. As a result, we obtain several formulas for the number of all lattice paths from the first column to the last column of $T$, that is, the $n^{th}$ column sum. Our methods are based on three classes of operators, which will also be studied independently.
\end{abstract}
\maketitle
\section{Introduction}
A path through the points of a set $U\subseteq\Z^d$ is known as a \textit{lattice path}, where by a path we mean a sequence of points. In particular, for a set $S\subseteq\Z^d$ of steps, an \textit{$S$-lattice path} inside $U$ is a sequence $\nu_1,\ldots,\nu_n\in U$ of points such that every step $\nu_{i+1}-\nu_i$ belongs to $S$, for all $i=1,\ldots,n-1$. Lattice paths are studied from a general (analytical) point of view by various authors where they investigate analytic behavior of complex generating functions of paths, estimations of the number of paths of given lengths, etc. (see \cite{cb-pf, mb}). However, a large variety of $S$-lattice paths inside a set $U$ of points are studied in the literature too, for instance Dyke paths, Schr\"{o}der paths, Delannoy paths, Motkzin paths, Narayana paths etc. Motzkin paths, the most related lattice paths to ours, are well studied in \cite{frb, ck-rwm, spe-scl-yny}. 

Every rectangular subset $U$ of $\Z^2$ can be considered as a table $T$ whose cells are correspond to the points of $U$. For a given $m\times\infty$ table $T_m$, we may consider $S$-lattice paths, where $S=\{(1,-1),(1,0),(1,1)\}$. In particular, lattice paths starting from a cell in the first column are of special interest. The number of all $S$-lattice paths from the first column to the $(x,y)$-cell is denoted by $\C_m(x,y)$, or simply by $\C(x,y)$ if there is no confusion. We usually write the number $\C(x,y)$ inside the $(x,y)$-cell for convenience (see Figure 1). Assuming $\C(x,0)=\C(x,m+1)=0$ for all $x\geq1$, we observe that
\[\C(x+1,y)=\C(x,y-1)+\C(x,y)+\C(x,y+1)\]
for all $x\geq1$ and $y=1,\ldots,m$.
\begin{figure}[h!]
\begin{tikzpicture}[scale=0.5]
\draw (0,7)--(6.0,7);
\draw (0,6)--(5.5,6);
\draw (0,5)--(4.5,5);
\draw (0,4)--(3.5,4);
\draw (0,2)--(3.5,2);
\draw (0,1)--(4.5,1);
\draw (0,0)--(5.5,0);

\draw (0,3.5)--(0,7);
\draw (1,3.5)--(1,7);
\draw (2,4.0)--(2,7);
\draw (3,4.0)--(3,7);
\draw (4,4.5)--(4,7);
\draw (5,5.5)--(5,7);
\draw (0,0)--(0,2.5);
\draw (1,0)--(1,2.5);
\draw (2,0)--(2,2.0);
\draw (3,0)--(3,2.0);
\draw (4,0)--(4,1.5);
\draw (5,0)--(5,0.5);

\node at (0.5,0.5) {$1$};
\node at (0.5,1.5) {$1$};
\node at (1.5,0.5) {$2$};
\node at (1.5,1.5) {$3$};
\node at (2.5,0.5) {$5$};
\node at (2.5,1.5) {$8$};
\node at (3.5,0.5) {$13$};

\node at (0.5,6.5) {$1$};
\node at (0.5,5.5) {$1$};
\node at (0.5,4.5) {$1$};
\node at (1.5,6.5) {$2$};
\node at (1.5,5.5) {$3$};
\node at (1.5,4.5) {$3$};
\node at (2.5,6.5) {$5$};
\node at (2.5,5.5) {$8$};
\node at (2.5,4.5) {$9$};
\node at (3.5,6.5) {$13$};
\node at (3.5,5.5) {$22$};
\node at (4.5,6.5) {$35$};

\draw (7.0,7)--(15.5,7);
\draw (7.5,6)--(15.0,6);
\draw (8.3,5)--(14.5,5);
\draw (8.5,4)--(14.0,4);
\draw (8.0,3)--(12.0,3);
\draw (8.5,1)--(10.5,1);
\draw (8.0,0)--(12.0,0);

\draw (9.0,0)--(9.0,1.5);
\draw (10.0,0)--(10.0,1.5);
\draw (11.0,0)--(11.0,0.5);
\draw (8.0,5.5)--(8.0,7);
\draw (9.0,2.5)--(9.0,7);
\draw (10.0,2.5)--(10.0,7);
\draw (13.5,3.5)--(13.5,7);
\draw (14.5,5.5)--(14.5,7);

\node at (9.5,6.5) {$x$};
\node at (9.5,5.5) {$y$};
\node at (9.5,4.5) {$z$};
\node at (9.5,3.5) {$w$};
\node at (11.75,6.5) {$x+y$};
\node at (11.75,5.5) {$x+y+z$};
\node at (11.75,4.5) {$y+z+w$};
\node at (9.5,0.5) {$x$};

\node at (0,3.25) {$\vdots$};
\node at (1,3.25) {$\vdots$};
\node at (9,2.25) {$\vdots$};
\node at (10,2.25) {$\vdots$};
\node at (6.5,7) {$\ldots$};
\node at (6.5,6) {$\ldots$};
\node at (6.75,0) {$\ldots$};
\node at (16.0,7) {$\ldots$};
\node at (15.5,6) {$\ldots$};
\node at (15.0,5) {$\ldots$};
\node at (14.5,4) {$\ldots$};
\node at (11.0,1) {$\ldots$};
\node at (12.5,0) {$\ldots$};
\end{tikzpicture}\\
\caption{The $m\times\infty$ table}
\end{figure}
The number of lattice paths from the first column to the $n^{th}$ column is denoted by $\I_m(n)$. It is evident that
\[\I_m(n)=\C(n,1)+\cdots+\C(n,m).\]

Lattice paths inside a table are studied in \cite{dy-mfdg-hgz, dy-mfdg} in several special cases, say when $m=1,2,3,4$, and some formulas for $\C(x,y)$ and $\I_m(n)$ are derived. It is shown that the number of paths from the first column to the cell $(n,1)$ or $(n,n)$ in an square $n\times n$ table, namely $\C(n,1)=\C(n,n)$, counts the number of directed animals of size $n$. Directed animals appear frequently in physics in study of thermodynamic models for critical phenomena, phase transitions, statistical physics, lattice gas models with extended hard-cores, river networks, etc (see \cite{dg-gv} and reference therein). Most results there are deal with exact formulas or asymptotic results for the number of various kinds of directed animals in $d$-dimensional spaces. For instance, it is known that $a_n^{1/n}$ tends to a constant in many cases, where $a_n$ denotes the number of directed animals of size $n$.

Recently, determinants of some Hankel matrices involving the numbers $\C_m(x,y)$ and their (weighted) generalizations are also computed in \cite{ck-dy}.

The aim of this paper is to study linear combination of rows and columns of the table and obtain recurrence relations of minimum degrees for $\C(x,y)$ and $\I_m(n)$. We note that the degree of a linear recurrence relation
\[a(n+k)=\alpha_0 a(n)+\cdots+\alpha_{k-1}a(n+k-1),\]
a sequence $\{a(n)\}$ satisfies is the number $k$ provided that $\alpha_0\neq0$. Indeed, $k$ is the degree of the associated polynomial of the corresponding recurrence relation.

Our results uses three classes of operators, denoted by $\M_o$, $\M_e$, and $\M'$, defined recursively. In section 2, the main results (say, Theorem \ref{rows linear combination}, and Theorem \ref{columns linear combination} and subsequent corollaries) are proved. Theorem \ref{rows linear combination} determines under which conditions a linear combination of rows (or rows entries) is a constant. Theorem \ref{columns linear combination} gives the recurrence relations of minimum degrees to which columns and columns sums satisfy. Since our methods make use of the operators $\M_o$, $\M_e$, and $\M'$ heavily, in the last section, we shall also describe their properties as well as giving precise formulas of them.
\section{Linear recurrences among rows, columns, and columns sums}
We begin this section with analyzing linear combinations of rows. Since the rows in an $m\times\infty$ table are symmetric, we always have equations of the form
\[\alpha_1\C(n,1)+\cdots+\alpha_m\C(n,m)=0,\]
where $\alpha_i+\alpha_{m+1-i}=0$ for all $i=0,\ldots,m$. Such a linear combination of columns entries (or rows), appearing in left hand side of the above equation, are called trivial linear combinations.
\begin{theorem}\label{rows linear combination}
Inside the $m\times\infty$ table, a nontrivial linear combination of columns entries is a constant if and only if $m\equiv1\pmod4$, and the equation is given by
\[\alpha_1\C(n,1)+\alpha_3\C(n,3)+\cdots+\alpha_{m-2}\C(n,m-2)+\alpha_m\C(n,m)=\lambda\]
for all $n\geq1$, where $\lambda\neq0$ is a fixed number and $\alpha_{2i+1}+\alpha_{m-2i}=(-1)^i2\lambda$, for all $i=0,\ldots,(m-1)/4$.
\end{theorem}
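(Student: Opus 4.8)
The plan is to encode each column as a vector and convert the hypothesis into a spectral condition on a transfer matrix. Write $v_n=(\C(n,1),\ldots,\C(n,m))^T$. The recurrence $\C(n+1,y)=\C(n,y-1)+\C(n,y)+\C(n,y+1)$ together with $\C(n,0)=\C(n,m+1)=0$ says exactly that $v_{n+1}=Mv_n$, where $M$ is the $m\times m$ symmetric tridiagonal matrix with $1$'s on the main, super- and sub-diagonals; moreover $v_1=\mathbf{1}=(1,\ldots,1)^T$, so $v_n=M^{n-1}\mathbf{1}$. A linear combination $\alpha^T v_n$ with $\alpha$ independent of $n$ is constant in $n$ precisely when $\alpha^T(M-I)v_n=0$ for every $n\geq1$, i.e. when $\alpha^T(M-I)M^{n-1}\mathbf{1}=0$ for all $n$.

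Next I would diagonalize $M=I+A$, where $A$ is the adjacency matrix of the path on $m$ vertices: its eigenvalues are $\lambda_k=1+2\cos\frac{k\pi}{m+1}$ with orthogonal eigenvectors $u^{(k)}$, $u^{(k)}_j=\sin\frac{jk\pi}{m+1}$, for $k=1,\ldots,m$, and these $\lambda_k$ are distinct. The decisive computation is the expansion $\mathbf{1}=\sum_k c_k u^{(k)}$: using $\sum_{j=1}^m\sin\frac{jk\pi}{m+1}=\frac{\sin\frac{mk\pi}{2(m+1)}\sin\frac{k\pi}{2}}{\sin\frac{k\pi}{2(m+1)}}$ and examining the factor $\sin\frac{k\pi}{2}$, one finds $c_k\neq0$ exactly when $k$ is odd. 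Consequently every $v_n$ lies in the span of the odd-$k$ eigenvectors, and writing $\alpha=\sum_k d_k u^{(k)}$ gives $\alpha^T v_n=\sum_{k\text{ odd}}d_k c_k\|u^{(k)}\|^2\lambda_k^{n-1}$. Since the $\lambda_k$ are distinct the sequences $\lambda_k^{n-1}$ are linearly independent, so this is constant in $n$ iff $d_k=0$ for every odd $k$ with $\lambda_k\neq1$.

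Now I would read off the classification. The only way to get $\lambda_k=1$ is $\cos\frac{k\pi}{m+1}=0$, i.e. $k=\frac{m+1}{2}$, which forces $m$ odd. A short computation with $u^{(k)}_{m+1-j}=(-1)^{k+1}u^{(k)}_j$ shows $u^{(k)}$ is symmetric ($\alpha_j=\alpha_{m+1-j}$) when $k$ is odd and antisymmetric ($\alpha_j=-\alpha_{m+1-j}$, hence trivial) when $k$ is even. Thus an admissible $\alpha$ is any combination of the even-$k$ eigenvectors, possibly augmented by $u^{((m+1)/2)}$ when $m$ is odd; its symmetric part can only come from $u^{((m+1)/2)}$, and that vector is both present and symmetric exactly when $\frac{m+1}{2}$ is odd, i.e. $m\equiv1\pmod4$. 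For $m\equiv3\pmod4$ the eigenvector $u^{((m+1)/2)}$ is antisymmetric and has $c_{(m+1)/2}=0$, so every admissible $\alpha$ is trivial and the only attainable constant is $0$; for $m$ even there is no eigenvalue $1$ at all, with the same conclusion.

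Finally, in the case $m\equiv1\pmod4$ I would extract the explicit form. The eigenvector $u^{((m+1)/2)}$ has $u_j=\sin\frac{j\pi}{2}$, so it is supported on odd indices with $u_{2i+1}=(-1)^i$ and $u_{2i}=0$, and $u^T\mathbf{1}=\sum_i(-1)^i=1$. Scaling by $\lambda$ and using the row symmetry $\C(n,2i+1)=\C(n,m-2i)$ to pair the odd indices $2i+1\leftrightarrow m-2i$, the combination reduces to the stated odd-indexed equation; any trivial summand leaves the pair-sums $\alpha_{2i+1}+\alpha_{m-2i}$ unchanged, and evaluating these on $\lambda u$ yields $\alpha_{2i+1}+\alpha_{m-2i}=(-1)^i2\lambda$ for $i=0,\ldots,\frac{m-1}{4}$, the middle term $i=\frac{m-1}{4}$ giving $2\alpha_{(m+1)/2}=(-1)^{(m-1)/4}2\lambda$. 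I expect the main obstacle to be the parity bookkeeping of the second step, namely proving cleanly that $c_k\neq0$ exactly for odd $k$ and that the eigenvalue-$1$ vector switches between symmetric and antisymmetric according to $m\bmod4$, since the entire trivial/nontrivial dichotomy downstream hinges on matching those parities.
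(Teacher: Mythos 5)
Your proof is correct, but it is a genuinely different argument from the one in the paper. The paper works entirely at the level of the recurrence: it differences the assumed identity $\sum_i\alpha_i\C(n,i)=c$ once to get a homogeneous relation with coefficients $\beta_1=\alpha_2$, $\beta_i=\alpha_{i-1}+\alpha_{i+1}$, $\beta_m=\alpha_{m-1}$, proves by an induction (re-applying the differencing transformation and evaluating at $n=1,2$) that any such homogeneous relation must be trivial, i.e.\ $\beta_i+\beta_{m+1-i}=0$, and then solves the resulting linear constraints on the $\alpha_i$ by a parity analysis that splits into the cases $m$ even, $m\equiv3\pmod4$, $m\equiv1\pmod4$; the converse is a short induction on $n$. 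You instead diagonalize the transfer matrix $M=I+A$ ($A$ the path-graph adjacency matrix), observe that the initial column $\mathbf{1}$ overlaps exactly the odd-index eigenvectors, and reduce constancy of $\alpha^TM^{n-1}\mathbf{1}$ to a statement about the eigenvalue-$1$ eigenvector $u_j=\sin\frac{j\pi}{2}$, whose existence requires $m$ odd and whose symmetry (hence nontriviality) requires $\frac{m+1}{2}$ odd, i.e.\ $m\equiv1\pmod4$. Your route buys conceptual clarity --- it explains \emph{why} the answer is governed by $m\bmod 4$, handles both directions of the equivalence in one framework, and connects naturally to the characteristic-polynomial machinery ($\det(xI-\T^*_m)$, the multipliers $\M_m$) that the paper develops later; the paper's route buys elementarity, using nothing beyond the defining recurrence and induction. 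Two small points you should still pin down: for odd $k$ the nonvanishing of $c_k$ also needs $\sin\frac{mk\pi}{2(m+1)}\neq0$, which follows since $(m+1)\nmid mk$ for $1\leq k\leq m$; and the linear independence of the sequences $\left\{\lambda_k^{n-1}\right\}_{n\geq1}$ for distinct $\lambda_k$ (a Vandermonde argument), which is what lets you kill the coefficients $d_k$ with $\lambda_k\neq1$. Both are routine, so they do not affect the correctness of your argument.
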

\begin{proof}
Suppose
\begin{equation}\label{rows linear combination 1}
\alpha_1\C(n,1)+\cdots+\alpha_m\C(n,m)=c
\end{equation}
is a constant for all $n\geq1$. We know that 
\[\C(n+1,i)=\begin{cases}
\C(n,i)+\C(n,i+1),&i=1,\\
\C(n,i-1)+\C(n,i)+\C(n,i+1),&1<i<m,\\
\C(n,i-1)+\C(n,i),&i=m
\end{cases}\]
for all $1<i<m$. Now from 
\[\alpha_1\C(n+1,1)+\cdots+\alpha_m\C(n+1,m)=c\]
we get
\begin{equation}\label{rows linear combination 2}
(\alpha_1+\alpha_2)\C(n,1)+\sum_{i=2}^{m-1}(\alpha_{i-1}+\alpha_i+\alpha_{i+1})\C(n,i)+(\alpha_{m-1}+\alpha_m)\C(n,m)=c.
\end{equation}
Combining \eqref{rows linear combination 1} and \eqref{rows linear combination 2} yields
\begin{equation}\label{rows linear combination 3}
\alpha_2\C(n,1)+\sum_{i=2}^{m-1}(\alpha_{i-1}+\alpha_{i+1})\C(n,i)+\alpha_{m-1}\C(n,m)=0
\end{equation}
for all $n\geq1$. Let $\beta_1=\alpha_2$, $\beta_m=\alpha_{m-1}$, and $\beta_i=\alpha_{i-1}+\alpha_{i+1}$ for all $1<i<m$. Then
\[\beta_1\C(n,1)+\cdots+\beta_m\C(n,m)=0\]
for all $n\geq1$. We show that $\beta_i+\beta_{m+1-i}=0$ for all $i\leq m/2$. The cases $n=1$ and $n=2$ yield $\sum_{i=1}^m\beta_i=3\sum_{i=1}^m\beta_i-(\beta_1+\beta_m)=0$, which imply that $\beta_1+\beta_m=0$. Now assume that $\beta_i+\beta_{m+1-i}=0$ for all $i=1,\ldots,k<m/2$. Then 
\begin{equation}\label{rows linear combination 4}
\beta_{k+1}\C(n,k+1)+\cdots+\beta_{m-k}\C(n,m-k)=0
\end{equation}
for all $n\geq1$. Considering the equation \eqref{rows linear combination 3} as a transformation of \eqref{rows linear combination 1} with $c=0$, and applying it $k$ times to the equation \eqref{rows linear combination 4}, we obtain the equation
\[\beta_{k+1}\C(n,1)+\beta'_2\C(n,2)+\cdots+\beta'_{m-1}\C(n,m-1)+\beta_{m-k}\C(n,m)=0\]
for some $\beta'_2,\ldots,\beta'_{m-1}$ and all $n\geq1$. Proceeding the same argument as above, we get $\beta_{k+1}+\beta_{m-k}=0$. Hence, we have shown that $\beta_i+\beta_{m+1-i}=0$ for all $i\leq m/2$. Therefore,
\[\begin{cases}
\alpha_2+\alpha_{m-1}=0,&\\
\alpha_i+\alpha_{i+2}+\alpha_{m-1-i}+\alpha_{m+1-i}=0,&1\leq i\leq m/2.
\end{cases}\]
If either $m$ is even or $m\equiv3\pmod4$, then we get $\alpha_i+\alpha_{m+1-i}=0$ for all $i\leq m/2$, which results in a trivial equation with zero constant. Now assume that $m\equiv1\pmod4$ is odd. Then $\alpha_{2i}+\alpha_{m+1-2i}=0$, for $i=1,\ldots,(m-1)/4$, and there exists a number $\lambda$ such that $\alpha_{2i+1}+\alpha_{m-2i}=(-1)^i2\lambda$ for all $i=0,\ldots,(m-1)/4$. Since $\lambda=0$ yields a trivial equation, we must have $\lambda\neq0$, as required. 

To prove the converse, let $\lambda\neq0$ be any number and assume $\alpha_{2i+1}+\alpha_{m-2i}=(-1)^i2\lambda$ for all $i=0,\ldots,(m-1)/4$. Clearly, the equation
\[\alpha_1\C(n,1)+\alpha_3\C(n,3)+\cdots+\alpha_{m-2}\C(n,m-2)+\alpha_m\C(n,m)=\lambda\]
holds for $n=1$. Now a simple inductive argument on $n$ in conjunction with \eqref{rows linear combination 2} shows that the equation holds for all $n\geq1$. The proof is complete.
\end{proof}

The rest of this section is devoted to the study of linear combination of rows entries. In order to do this, we define three classes of operators and apply them to find the linear recurrences among rows entries, columns and columns sums. Let $\M_m:=\M_m(\Delta)$ be the \textit{multiplier function} defined as 
\[\M_m(0)=\begin{cases}2,&m\text{ is odd},\\1,&m\text{ is even},\end{cases}\]
$\M_m(1)=\Delta-2+\M_m(0)$, and $\M_m(n+2)=\Delta\M_m(n+1)-\M_m(n)$ for all $n\geq0$, where $\Delta$ is the difference operator defined as $\Delta a(n)=a(n+1)-a(n)$ for any sequence $\{a(n)\}$ of numbers. For convenience, we set $\M_o:=\M_1$ and $\M_e:=\M_2$ as $\M_m$ depends only on the parity of $m$.

In what follows, we shall use the multipliers $\M_m$, acting on the second argument of $\C(n,i)$, to obtain relations for columns entries and apply them to derive formulas for $\I_m(n)$ as a function of a column entry.
\begin{lemma}\label{two rows equivalence by M}
Inside the $m\times\infty$ table, we have
\[\M_m(k-b)\C(n,a)=\M_m(k-a)\C(n,b)\]
for all $a,b=1,\ldots,k$, where $k=\ceil{m/2}$.
\end{lemma}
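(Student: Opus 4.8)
The plan is to prove the stronger statement that all of the first $k=\ceil{m/2}$ rows are generated from a single auxiliary sequence by the multipliers, and then deduce the lemma from commutativity. Concretely, I will show there is a sequence $g=\{g(n)\}$ with
\[\C(n,a)=\bigl(\M_m(k-a)\,g\bigr)(n)\qquad(a=1,\ldots,k).\]
Granting this, because $\M_m(k-a)$ and $\M_m(k-b)$ are polynomials in the single operator $\Delta$ they commute, so
\[\M_m(k-b)\C(n,a)=\M_m(k-b)\M_m(k-a)g=\M_m(k-a)\M_m(k-b)g=\M_m(k-a)\C(n,b),\]
which is exactly the assertion.

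First I would rewrite the defining path recurrence as a recurrence in the \emph{second} coordinate. From $\C(n+1,i)=\C(n,i-1)+\C(n,i)+\C(n,i+1)$ for $1<i<m$ we obtain $\Delta\C(n,i)=\C(n,i-1)+\C(n,i+1)$, hence
\[\C(n,i+1)=\Delta\C(n,i)-\C(n,i-1)\qquad(1<i<m).\]
Crucially this is the \emph{same} three-term recurrence $X_{j+2}=\Delta X_{j+1}-X_j$ that defines the multipliers $\M_m$. Reading it downward as $\C(n,a)=\Delta\C(n,a+1)-\C(n,a+2)$ and comparing with $\M_m(k-a)=\Delta\M_m(k-a-1)-\M_m(k-a-2)$, one sees that the claimed identity $\C(n,a)=\M_m(k-a)g$ propagates automatically once it is known for two consecutive values of $a$. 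Thus only the two top rows $a=k$ and $a=k-1$ must be matched against the initial data $\M_m(0)$ and $\M_m(1)$.

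The base step is where the parity of $m$ and the reflection symmetry $\C(n,i)=\C(n,m+1-i)$ enter, and this is the part requiring the most care. Setting $g(n):=\C(n,k)/\M_m(0)$ (a genuine sequence, as $\M_m(0)\in\{1,2\}$), the case $a=k$ holds by definition. For $a=k-1$ I would use the recurrence at the middle of the table together with the symmetry: when $m=2k-1$ is odd the identity $\C(n,k+1)=\C(n,k-1)$ turns the relation at $i=k$ into $2\C(n,k-1)=\Delta\C(n,k)$, matching $\M_m(1)=\Delta$; when $m=2k$ is even the identity $\C(n,k+1)=\C(n,k)$ gives $\C(n,k-1)=(\Delta-1)\C(n,k)$, matching $\M_m(1)=\Delta-1$. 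In either parity $\C(n,k-1)=\M_m(1)g(n)$.

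Finally, a downward induction on $a$ from $k-2$ to $1$ completes the stronger claim: assuming $\C(n,a+1)=\M_m(k-a-1)g$ and $\C(n,a+2)=\M_m(k-a-2)g$, the second-coordinate recurrence $\C(n,a)=\Delta\C(n,a+1)-\C(n,a+2)$ together with $\M_m(k-a)=\Delta\M_m(k-a-1)-\M_m(k-a-2)$ forces $\C(n,a)=\M_m(k-a)g$; one checks the interior condition $1<a+1<m$ holds throughout this range. Combined with the commutativity observation above, the lemma follows. The main obstacle is purely the base case: pinning down the two top rows so that they agree with $\M_m(0)$ and $\M_m(1)$ in each parity, since everything downstream is forced by a recurrence that $\C$ and $\M_m$ literally share.
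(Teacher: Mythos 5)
Your proposal is correct and takes essentially the same route as the paper: your stronger claim $\C(n,a)=\M_m(k-a)g$ with $g=\C(n,k)/\M_m(0)$ is exactly the paper's key identity $\M_m(k-a)\C(n,k)=\M_m(0)\C(n,a)$ after normalizing by $\M_m(0)$, and both proofs establish it by a parity-split base case at the middle row (via the reflection symmetry) plus an induction driven by the three-term recurrence shared by $\C$ and $\M_m$, then conclude by commuting the two multipliers as polynomials in $\Delta$ (a step the paper uses implicitly and you state explicitly).
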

\begin{proof}
Let $k=\ceil{m/2}$. First we show that
\[\M_m(a)\C(n,k)=\M_m(0)\C(n,k-a)\]
for all $a=0,\ldots,k-1$. We know from the definition that 
\[\Delta\C(n,a)=\C(n,a-1)+\C(n,a+1)\]
for all $1<a<m$. We have two cases:

(1) $m$ is odd. Then $m=2k+1$. We have 
\[\M_m(1)\C(n,k)=\C(n,k-1)+\C(n,k+1)=2\C(n,k-1).\]
Also, $\Delta\M_m(1)=2\C(n,k-2)+2\C(n,k)$, which implies that 
\[\M_m(2)\C(n,k)=2\C(n,k-2).\]
Now if the result holds for some $2\leq a<k$, then $\M_m(a)\C(n,k)=2\C(n,k-a)$, from which it follows that 
\begin{align*}
\Delta\M_m(a)\C(n,k)=&2\C(n,k-(a+1))+2\C(n,k-(a-1))\\
=&\M_m(a-1)\C(n,k)+2\C(n,k-(a+1)).
\end{align*}
Thus,
\[\M_m(a+1)\C(n,k)=2\C(n,k-(a+1)),\]
and the result follows.

(2) $m$ is even. Then $m=2k$. In this case 
\[\Delta\C(n,k)=\C(n,k-1)+\C(n,k+1)=\C(n,k-1)+\C(n,k),\]
that is,
\[\M_m(1)\C(n,k)=\C(n,k-1).\]
Also, $\Delta\M_m(1)\C(n,k)=\C(n,k-2)+\C(n,k)$ so that
\[\M_m(2)\C(n,k)=\C(n,k-2).\]
The rest of proof is similar to (1) and we are done.

Now let $0\leq a,b\leq k-1$. Then
\[\M_m(a)\C(n,k)=\M_m(0)\C(n,k-a)\]
and
\[\M_m(b)\C(n,k)=\M_m(0)\C(n,k-b)\]
Therefore,
\begin{align*}
\M_m(0)\M_m(b)\C(n,k-a)&=\M_m(a)\M_m(b)\C(n,k)\\
&=\M_m(0)\M_m(a)\C(n,k-b),
\end{align*}
from which, by substituting $a\mapsto k-a$ and $b\mapsto k-b$, the result follows.
\end{proof}
\begin{corollary}\label{I_m(n) by D(n,a) and M}
Inside the $m\times\infty$ table, we have
\[\M_m(k-a)\I_m(n)=2(\M_m(k-1)+\cdots+\M_m(1)+1)\C(n,a)\]
for all $a=1,\ldots,k$ and $n\geq1$, where $k=\ceil{m/2}$. In particular, for $a=k$, we have
\[\I_m(n)=\frac{2}{\M_m(0)}(\M_m(k-1)+\cdots+\M_m(1)+1)\C(n,k)\]
for all $n\geq1$.
\end{corollary}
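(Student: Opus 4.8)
The plan is to first prove the special case $a=k$, which is precisely the ``in particular'' assertion, and then to leverage Lemma \ref{two rows equivalence by M} to propagate it to every $a$. The starting point is the symmetry of the rows, $\C(n,i)=\C(n,m+1-i)$, which lets me fold the sum $\I_m(n)=\C(n,1)+\cdots+\C(n,m)$ onto its top half. Splitting on parity, the middle column $\C(n,k)$ is counted once when $m$ is odd (where $m=2k-1$) while the columns $1,\ldots,k$ are each counted twice when $m$ is even (where $m=2k$); since $\M_m(0)=2$ in the odd case and $\M_m(0)=1$ in the even case, both possibilities can be recorded uniformly as
\[\I_m(n)=2\sum_{i=1}^{k-1}\C(n,i)+\frac{2}{\M_m(0)}\C(n,k).\]

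Next I would rewrite each top-half column in terms of the middle one. Taking $b=k$ in Lemma \ref{two rows equivalence by M} gives $\M_m(0)\C(n,a)=\M_m(k-a)\C(n,k)$, that is $\C(n,i)=\M_m(0)^{-1}\M_m(k-i)\C(n,k)$ for $i=1,\ldots,k$. Substituting this into the displayed expression for $\I_m(n)$, pulling out the scalar $\M_m(0)^{-1}$, and reindexing the resulting sum via $a=k-i$ (the trailing $+1$ coming from the separated $\tfrac{2}{\M_m(0)}\C(n,k)$ term) collapses everything to
\[\I_m(n)=\frac{2}{\M_m(0)}\Big(\M_m(k-1)+\cdots+\M_m(1)+1\Big)\C(n,k),\]
which is exactly the $a=k$ statement.

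Finally, for general $a$ I would apply the operator $\M_m(k-a)$ to both sides of this last identity; since every $\M_m(j)$ is a polynomial in $\Delta$, they all commute, so $\M_m(k-a)$ passes through the polynomial coefficient and acts directly on $\C(n,k)$. Using Lemma \ref{two rows equivalence by M} once more in the form $\M_m(k-a)\C(n,k)=\M_m(0)\C(n,a)$, the factor $\M_m(0)$ produced on the right cancels the $\M_m(0)^{-1}$ out front, leaving precisely $\M_m(k-a)\I_m(n)=2(\M_m(k-1)+\cdots+\M_m(1)+1)\C(n,a)$. The one point demanding care is the bookkeeping between operators and scalars: the $\M_m(j)$ with $j\geq1$ are genuine difference operators in $n$, whereas $\M_m(0)$ is the nonzero constant $2$ or $1$, so cancelling $\M_m(0)$ is legitimate exactly because it is a scalar. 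Beyond that subtlety, the only remaining work is the routine parity case-check that produces the folded form of $\I_m(n)$ and the bookkeeping of the reindexed, telescoped sum.
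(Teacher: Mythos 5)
Your proposal is correct and uses essentially the same ingredients as the paper's proof: folding $\I_m(n)$ onto its top half by row symmetry (with the parity case-check absorbed into $\M_m(0)$) and invoking Lemma \ref{two rows equivalence by M}, noting that $\M_m(0)$ is a nonzero scalar so it can be cancelled. The only difference is organizational — you establish the $a=k$ identity first by pivoting every column through $\C(n,k)$ and then apply $\M_m(k-a)$ to generalize, whereas the paper performs the termwise computation $\M_m(k-a)\C(n,i)=\M_m(k-i)\C(n,a)$ directly for arbitrary $a$ and obtains the $a=k$ case as a specialization — which does not change the substance of the argument.
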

\begin{proof}
We know that $\M_m(k-a)\C(n,b)=\M_m(k-b)\C(n,a)$ for all $b=1,\ldots,k$. If $m$ is odd, then
\begin{align*}
\M_m(k-a)\I_m(n)&=\M_m(k-a)(2\C(n,1)+\cdots+2\C(n,k-1)+\C(n,k))\\
&=(2\M_m(k-1)+\cdots+2\M_m(1)+\M_m(0))\C(n,a)\\
&=2(\M_m(k-1)+\cdots+\M_m(1)+1)\C(n,a).
\end{align*}
Also, if $m$ is even, then
\begin{align*}
\M_m(k-a)\I_m(n)&=\M_m(k-a)(2\C(n,1)+\cdots+2\C(n,k-1)+2\C(n,k))\\
&=(2\M_m(k-1)+\cdots+2\M_m(1)+2\M_m(0))\C(n,a)\\
&=2(\M_m(k-1)+\cdots+\M_m(1)+1)\C(n,a).
\end{align*}
The proof is complete.
\end{proof}

Analogous to the multipliers $\M_m$, we may define the multipliers $\M'$ as $\M'(0)=1$, $\M'(1)=\Delta$, and $\M'(n+2)=\Delta\M'(n+1)-\M'(n)$ for all $n\geq0$.
\begin{lemma}\label{two rows equivalence by M'}
Inside the $m\times\infty$ table, we have
\[\M'(b-1)\C(n,a)=\M'(a-1)\C(n,b)\]
for all $a,b=1,\ldots,k$, where $k=\ceil{m/2}$.
\end{lemma}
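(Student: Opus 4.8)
The plan is to imitate the proof of Lemma \ref{two rows equivalence by M}, but to anchor the argument at the \emph{first} row rather than the middle one, since the normalization $\M'(0)=1$, $\M'(1)=\Delta$ makes the multipliers $\M'$ naturally adapted to the boundary at $i=1$. Concretely, the key intermediate identity I would prove is
\[\M'(a-1)\C(n,1)=\C(n,a),\qquad a=1,\ldots,k.\]
Granting this, the lemma is immediate: for $a,b\in\{1,\ldots,k\}$, substituting $\C(n,a)=\M'(a-1)\C(n,1)$ and $\C(n,b)=\M'(b-1)\C(n,1)$ turns both sides of the asserted equality into $\M'(a-1)\M'(b-1)\C(n,1)$, and these agree because $\M'(a-1)$ and $\M'(b-1)$ are polynomials in the single operator $\Delta$ and hence commute.

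To prove the intermediate identity I would induct on $a$. The base cases $a=1$ and $a=2$ are direct: $\M'(0)\C(n,1)=\C(n,1)$, while $\M'(1)\C(n,1)=\Delta\C(n,1)=\C(n,2)$ using $\C(n+1,1)=\C(n,1)+\C(n,2)$. For the inductive step, assuming the identity for $a-1$ and $a$ with $1<a<m$, the defining recurrence $\M'(a)=\Delta\M'(a-1)-\M'(a-2)$ gives
\[\M'(a)\C(n,1)=\Delta\C(n,a)-\C(n,a-1)=\bigl(\C(n,a-1)+\C(n,a+1)\bigr)-\C(n,a-1)=\C(n,a+1),\]
where I use $\Delta\C(n,a)=\C(n,a-1)+\C(n,a+1)$, valid for $1<a<m$. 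This advances the induction.

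The only point requiring care — and the step I expect to be the main, if modest, obstacle — is ensuring that the induction never reaches the top boundary row $i=m$, where $\Delta\C(n,m)=\C(n,m-1)$ breaks the clean three-term recurrence. This is guaranteed by the range: since we only need $a$ up to $k=\ceil{m/2}$, every index invoked in the step satisfies $a\le k-1<m$, so the interior recurrence always applies (the lower boundary $i=1$ causes no trouble, as the convention $\C(n,0)=0$ makes it a special case of the interior formula). The small values of $m$ for which $k-1\le1$ can be checked directly against the base cases, after which the general induction together with the commutativity observation completes the proof.
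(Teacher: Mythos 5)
Your proposal is correct and is essentially identical to the paper's own proof: the paper likewise establishes $\M'(a-1)\C(n,1)=\C(n,a)$ for $a=1,\ldots,k$ by induction using $\Delta\C(n,a)=\C(n,a-1)+\C(n,a+1)$, and then concludes via $\M'(b-1)\C(n,a)=\M'(a-1)\M'(b-1)\C(n,1)=\M'(a-1)\C(n,b)$, relying on the commutativity of polynomials in $\Delta$. Your explicit attention to the boundary rows (why the induction never needs the degenerate recurrence at $i=m$) is a point the paper passes over silently, but it does not change the argument.
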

\begin{proof}
Let $k=\ceil{m/2}$. First we show that
\[\M'(a-1)\C(n,1)=\C(n,a)\]
for all $i=1,\ldots,k$. Clearly, $\M'(0)\C(n,1)=\C(n,1)$. Also, $\Delta\C(n,1)=\C(n,2)$, that is, $\M'(1)\C(n,1)=\C(n,2)$. Now assume $a<k$ and $\M'(b-1)\C(n,1)=\C(n,b)$ for all $1\leq b\leq a$. Then 
\begin{align*}
\Delta\M'(a-1)\C(n,1)&=\Delta\C(n,a)\\
&=\C(n,a-1)+\C(n,a+1)\\
&=\M'(a-2)\C(n,1)+\C(n,a+1),
\end{align*}
from which it follows that $\M'(a)\C(n,1)=\C(n,a+1)$.

Now let $1\leq a,b\leq k$. Then
\[\M'(a-1)\C(n,1)=\C(n,a)\]
and
\[\M'(b-1)\C(n,1)=\C(n,b).\]
Therefore,
\[\M'(b-1)\C(n,a)=\M'(a-1)\M'(b-1)\C(n,1)=\M'(a-1)\C(n,b),\]
as required.
\end{proof}
\begin{corollary}\label{I_m(n) by D(n,a) and M'}
Inside the $m\times\infty$ table, we have
\[\M'(a-1)\I_m(n)=2\parn{\frac{\M'(k-1)}{\M_m(0)}+\M'(k-2)+\cdots+\M'(0)}\C(n,a)\]
for all $a=1,\ldots,k$ and $n\geq1$, where $k=\ceil{m/2}$. In particular, for $a=1$, we have
\[\I_m(n)=2\parn{\frac{\M'(k-1)}{\M_m(0)}+\M'(k-2)+\cdots+\M'(0)}\C(n,1)\]
for all $n\geq1$.
\end{corollary}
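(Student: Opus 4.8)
The plan is to mirror the proof of Corollary \ref{I_m(n) by D(n,a) and M}, but with the multiplier $\M'$ and Lemma \ref{two rows equivalence by M'} replacing $\M_m$ and Lemma \ref{two rows equivalence by M}. First I would exploit the row symmetry $\C(n,i)=\C(n,m+1-i)$ to fold the sum $\I_m(n)=\C(n,1)+\cdots+\C(n,m)$ onto the first $k=\ceil{m/2}$ columns. When $m$ is even ($m=2k$) this gives $\I_m(n)=2(\C(n,1)+\cdots+\C(n,k))$, whereas when $m$ is odd ($m=2k-1$) the central column is fixed by the symmetry, so instead $\I_m(n)=2(\C(n,1)+\cdots+\C(n,k-1))+\C(n,k)$.

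Next I would apply the operator $\M'(a-1)$ to both sides and push it through each summand using Lemma \ref{two rows equivalence by M'}, which replaces $\M'(a-1)\C(n,b)$ by $\M'(b-1)\C(n,a)$ for every $b=1,\ldots,k$. This factors $\C(n,a)$ out of the entire expression, leaving a coefficient that is a polynomial in the operators $\M'(0),\ldots,\M'(k-1)$. In the even case the coefficient is $2(\M'(0)+\cdots+\M'(k-1))$; in the odd case the doubled columns contribute $2(\M'(0)+\cdots+\M'(k-2))$ and the lone central column contributes an extra $\M'(k-1)$.

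The final step is to verify that both coefficients coincide with the claimed uniform expression $2\parn{\frac{\M'(k-1)}{\M_m(0)}+\M'(k-2)+\cdots+\M'(0)}$. This is precisely where the definition $\M_m(0)=1$ for $m$ even and $\M_m(0)=2$ for $m$ odd does the work: dividing the leading term $\M'(k-1)$ by $\M_m(0)$ collapses the two parity cases into a single formula, because in the odd case the central column enters with exactly half the weight of the others. The special case $a=1$ then follows at once, since $\M'(0)$ is the identity operator and $\M'(a-1)\C(n,1)=\C(n,1)$.

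I expect the only delicate point to be the bookkeeping across the parity of $m$, in particular ensuring the central column in the odd case is counted exactly once and that reabsorbing its weight through the $\M_m(0)$ denominator is done correctly. Everything else is a direct substitution licensed by Lemma \ref{two rows equivalence by M'}, whose hypothesis $a,b\in\{1,\ldots,k\}$ is satisfied precisely because folding by symmetry confines all indices to this range.
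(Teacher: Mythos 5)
Your proposal is correct and follows essentially the same route as the paper's own proof: fold $\I_m(n)$ onto the first $k=\ceil{m/2}$ columns by symmetry (splitting on the parity of $m$), push $\M'(a-1)$ through each summand via Lemma \ref{two rows equivalence by M'}, and absorb the parity discrepancy in the leading term using $\M_m(0)\in\{1,2\}$. The bookkeeping you flag as delicate is handled exactly as you describe, so there is nothing to add.
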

\begin{proof}
We know that $\M'(a-1)\C(n,b)=\M'(b-1)\C(n,a)$ for all $b=1,\ldots,k$. If $m$ is odd, then
\begin{align*}
\M'(a-1)\I_m(n)&=\M'(a-1)(2\C(n,1)+\cdots+2\C(n,k-1)+\C(n,k))\\
&=(2\M'(0)+\cdots+2\M'(k-2)+\M'(k-1))\C(n,a)\\
&=2(\M'(0)+\cdots+\M'(k-2)+\M'(k-1)\M_m(0)^{-1})\C(n,a).
\end{align*}
Also, if $m$ is even, then
\begin{align*}
\M'(a)\I_m(n)&=\M'(a)(2\C(n,1)+\cdots+2\C(n,k-1)+2\C(n,k))\\
&=(2\M'(0)+\cdots+2\M'(k-2)+2\M'(k-1))\C(n,a)\\
&=2(\M'(0)+\cdots+\M'(k-2)+\M'(k-1)\M_m(0)^{-1})\C(n,a).
\end{align*}
The proof is complete.
\end{proof}

In order to find the recurrence relation of minimum degree among columns, we must to analyze the matrices formed by columns of the tables. The matrix of the $m\times\infty$ table is $\T_m$, which is the tridiagonal matrix with diagonal, superdiagonal, and subdiagonal entries are equal to $1$. Clearly, $\T_mC_m(n)=C_m(n+1)$ for all $n\geq1$, where $C_m(n)$ denotes the $n^{th}$ column. However, since the rows are symmetric, we can restrict ourself to the first $\ceil{m/2}$ rows. For this we define the following matrices according to the parity of $m$. Let
\[\O_1=[1],\quad\O_2=\begin{bmatrix}1&1\\2&1\end{bmatrix},\quad\O_k=\begin{bmatrix}1&1&0&\cdots&0&0&0\\1&1&1&\cdots&0&0&0\\&\vdots&&\ddots&&\vdots&\\0&0&0&\cdots&1&1&1\\0&0&0&\cdots&0&2&1\end{bmatrix}\quad(k\geq3),\]
and
\[\E_1=[2],\quad\E_2=\begin{bmatrix}1&1\\1&2\end{bmatrix},\quad\E_k=\begin{bmatrix}1&1&0&\cdots&0&0&0\\1&1&1&\cdots&0&0&0\\&\vdots&&\ddots&&\vdots&\\0&0&0&\cdots&1&1&1\\0&0&0&\cdots&0&1&2\end{bmatrix}\quad(k\geq3).\]
Also, let 
\[\T^*_m=\begin{cases}\O_{\ceil{\frac{m}{2}}},&m\text{ is odd},\\\E_{\ceil{\frac{m}{2}}},&m\text{ is even}\end{cases}\]
be the reduced matrix of the $m\times\infty$ table for all $m\geq1$.

Assume $C^*_m(n)$ is the reduced $n^{th}$ column in the $m\times\infty$ table including entries in the rows $1,\ldots,\ceil{m/2}$. From the definition, it follows that $\T^*_m C^*_m(n)=C^*_m(n+1)$ for all $n\geq1$.
\begin{lemma}\label{det(T^*_m)}
For every $m\geq1$, we have
\[\det\T^*_m=\begin{cases}(-1)^{\floor{\frac{\ceil{\frac{m}{2}}+1}{3}}}2^{\chi_{3\Z}\parn{\ceil{\frac{m}{2}}}},&m\text{ is odd},\\(-1)^{\floor{\frac{\ceil{\frac{m}{2}}}{3}}}2^{\chi_{3\Z+1}\parn{\ceil{\frac{m}{2}}}},&m\text{ is even},\end{cases}\]
where $\chi$ denotes the characteristic function.
\end{lemma}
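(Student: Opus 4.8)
The plan is to reduce the determinant of $\T^*_m$ to that of a genuinely tridiagonal matrix for which a clean three-term recurrence is available. Write $D_k$ for the determinant of the $k\times k$ matrix carrying $1$'s on the main diagonal, the superdiagonal, and the subdiagonal (that is, $\O_k$ or $\E_k$ with its single exceptional entry $2$ replaced by $1$). Cofactor expansion along the last row gives the standard recurrence $D_k=D_{k-1}-D_{k-2}$ with $D_0=D_1=1$; since the characteristic polynomial $x^2-x+1$ has the primitive sixth roots of unity as its roots, the sequence $\{D_k\}$ is purely periodic of period $6$, running through $1,1,0,-1,-1,0$ for $k=0,1,2,\dots$. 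This periodicity is exactly what will produce the $3$-periodic power of $2$ and the $6$-periodic sign in the claimed formula.

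Next I would expand $\det\O_k$ and $\det\E_k$ along their last rows, which are $(0,\dots,0,2,1)$ and $(0,\dots,0,1,2)$ respectively. The cofactor of the bottom-right (diagonal) entry is the leading principal $(k-1)\times(k-1)$ minor, which equals $D_{k-1}$ precisely because the exceptional entry occurs only in the deleted last row. The cofactor of the off-diagonal entry needs slightly more care: after deleting the $(k-1)$st column, the surviving last column has a single nonzero entry (the lone remaining superdiagonal $1$), and expanding along it collapses the minor to $D_{k-2}$. Tracking the signs $(-1)^{2k-1}=-1$ and $(-1)^{2k}=1$ then yields $\det\O_k=D_{k-1}-2D_{k-2}$ and $\det\E_k=2D_{k-1}-D_{k-2}$ for $k\geq2$, with the base cases $\det\O_1=1$ and $\det\E_1=2$ checked by hand.

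Finally, since $\{D_k\}$ has period $6$, the derived recurrences force $\{\det\O_k\}$ and $\{\det\E_k\}$ to be period-$6$ sequences as well; explicitly they cycle through $1,-1,-2,-1,1,2$ and $2,1,-1,-2,-1,1$ over one period. On the other side, the two right-hand sides in the statement are themselves $6$-periodic in $k=\ceil{m/2}$, because $\chi_{3\Z}(k)$ (resp. $\chi_{3\Z+1}(k)$) is $3$-periodic, while replacing $k$ by $k+6$ raises $\floor{(k+1)/3}$ (resp. $\floor{k/3}$) by exactly $2$ and so leaves the sign unchanged. It therefore suffices to match the closed form against the computed values on a single period $k=1,\dots,6$, a direct check, after which the identity holds for every $m$ by periodicity. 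The only genuinely delicate point is the bookkeeping for the off-diagonal cofactor and the confirmation that the floor-and-characteristic-function expression reproduces the correct sign inside each period; once the two recurrences are established, the remainder is routine.
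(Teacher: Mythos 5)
Your proof is correct, and it rests on the same underlying mechanism as the paper's — a three-term tridiagonal determinant recurrence whose characteristic polynomial $x^2-x+1$ has primitive sixth roots of unity as roots, hence period-$6$ sequences, finished by checking one full period — but the decomposition is different. The paper expands $\det\O_k$ and $\det\E_k$ along the \emph{first} row (and then the second), where no exceptional entry $2$ appears; the minors there stay inside the same families, so it obtains the homogeneous recurrences $\det\O_k=\det\O_{k-1}-\det\O_{k-2}$ and $\det\E_k=\det\E_{k-1}-\det\E_{k-2}$ directly, with no auxiliary sequence and essentially no sign bookkeeping. You expand along the \emph{last} row, where the $2$ sits, which forces you to introduce the all-ones tridiagonal determinants $D_k$ and prove the inhomogeneous expressions $\det\O_k=D_{k-1}-2D_{k-2}$ and $\det\E_k=2D_{k-1}-D_{k-2}$; this is exactly the cofactor bookkeeping you flag as delicate, and it is the extra cost of your route. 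What it buys is also real: closed expressions for both determinants in terms of one standard sequence, and an explicit identification of where the period $6$ comes from (the roots of $x^2-x+1$), which the paper leaves as "a simple verification." Both arguments then terminate identically, matching the cycles $1,-1,-2,-1,1,2$ and $2,1,-1,-2,-1,1$ against the $6$-periodic right-hand sides of the stated formula, and your observation that the sign exponents $\floor{(k+1)/3}$ and $\floor{k/3}$ increase by exactly $2$ under $k\mapsto k+6$ is the correct justification that a single-period check suffices.
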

\begin{proof}
Expanding the determinants on the first row and then on the second row yields
\[\det(\O_k)=\det(\O_{k-1})-\det(\O_{k-2})\quad\text{and}\quad\det(\E_k)=\det(\E_{k-1})-\det(\E_{k-2})\]
for all $k\geq3$. A simple verification shows that the sequences $\{\det(\O_k)\}$ and $\{\det(\E_k)\}$ are periodic of length $6$ and begin with 
\[1,-1,-2,-1,1,2\quad\text{and}\quad2,1,-1,-2,-1,1,\]
respectively. Hence, the result follows.
\end{proof}
\begin{corollary}
Let $k=\ceil{m/2}$. Then 
\[\det([C^*_m(n+1)\cdots C^*_m(n+k)])=\det(\T^*_m)^n\]
for all $n\geq0$.
\end{corollary}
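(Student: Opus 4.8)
The plan is to reduce the whole identity to a single base case by means of the shift relation $\T^*_m C^*_m(n)=C^*_m(n+1)$, and then to evaluate that base case combinatorially. Write $M(n):=[C^*_m(n+1)\cdots C^*_m(n+k)]$ for $n\geq0$. For $n\geq1$ every column of $M(n)$ is obtained from the corresponding column of $M(n-1)$ by applying $\T^*_m$, because $C^*_m(n+j)=\T^*_m C^*_m(n+j-1)$ for $j=1,\ldots,k$ and all the indices $n+j-1$ are at least $1$. Hence $M(n)=\T^*_m M(n-1)$, and taking determinants gives $\det M(n)=\det(\T^*_m)\det M(n-1)$ for every $n\geq1$. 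A one-line induction then yields $\det M(n)=\det(\T^*_m)^n\det M(0)$, so the entire statement collapses to proving $\det M(0)=\det[C^*_m(1)\cdots C^*_m(k)]=1$.

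For the base case the $(i,j)$ entry of $M(0)$ is $\C(j,i)$ with $1\leq i,j\leq k$. I would apply the unit lower-triangular row operation replacing row $i$ by row $i$ minus row $i-1$ (equivalently, left-multiply by a unipotent matrix, leaving the determinant unchanged), obtaining a matrix $\widehat M$ with $\widehat M_{ij}=\C(j,i)-\C(j,i-1)$, where $\C(j,0):=0$. The claim is that $\widehat M$ is upper unitriangular, so that $\det M(0)=\det\widehat M=1$. This rests on two facts valid in the top-left $k\times k$ block: first, $\C(j,i)=\C(j,i-1)$ whenever $j<i\leq k$, which kills every entry strictly below the diagonal; and second, $\C(j,j)-\C(j,j-1)=1$, which puts $1$ on each diagonal entry.

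Both facts follow from the observation that in this index range neither boundary condition ($\C(x,0)=0$ at the top, $\C(x,m+1)=0$ at the bottom) is active. A path of length $j$ ending at row $i$ visits only rows between $i-(j-1)$ and $i+(j-1)$; since $i,j\leq k=\ceil{m/2}$ we have $i+j-1\leq 2k-1\leq m$, so no path can reach row $m+1$, and as soon as $i\geq j$ we have $i-(j-1)\geq1$, so no path can reach row $0$ either. For such a cell $\C(j,i)$ therefore equals the number of $\set{(1,-1),(1,0),(1,1)}$-paths of length $j$ in the boundary-free strip, which by translation invariance in the row direction equals $3^{j-1}$, independent of $i$; this gives the first fact. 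For the second, $\C(j,j)=3^{j-1}$ while $\C(j,j-1)$ falls short of the free count by exactly the paths that would touch row $0$; such a path must start on row $0$ and ascend at every step, so there is precisely one, giving $\C(j,j-1)=3^{j-1}-1$ and hence $\widehat M_{jj}=1$.

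The determinant recursion is the easy half and is forced by the shift relation alone. I expect the base case to be the only real obstacle, and within it the delicate point is the bookkeeping: one must verify that the ranges $1\leq j\leq i\leq k$ genuinely keep both boundaries inactive — this is exactly where $k=\ceil{m/2}$ and the inequality $2k-1\leq m$ enter — and count correctly the single boundary-violating path that produces the unit diagonal. A more algebraic alternative would use Lemma \ref{two rows equivalence by M'} to write row $i$ as $\M'(i-1)$ applied to the first row and thereby reduce $M(0)$ to finite differences of the first-row sequence; but since each $\M'(i-1)$ is monic of degree $i-1$ in $\Delta$, this still leaves a nontrivial Hankel-type determinant to evaluate, so I would prefer the combinatorial finish above.
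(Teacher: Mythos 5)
Your proof is correct and takes essentially the same route as the paper's: both reduce the identity to the base case $\det[C^*_m(1)\cdots C^*_m(k)]=1$ via the shift relation $\T^*_m C^*_m(n)=C^*_m(n+1)$ and multiplicativity of determinants, and both evaluate the base case by subtracting consecutive rows, using that the $(i,j)$ entry equals $3^{j-1}$ for $i\geq j$ and $3^{j-1}-1$ on the superdiagonal, so the result is upper unitriangular. The only difference is that you supply the combinatorial justification (boundary-inactivity for $i\geq j\leq k$ and the single ascending path explaining the $-1$) that the paper dismisses with ``it is not difficult to see, say by using induction on columns.''
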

\begin{proof}
It is not difficult to see, say by using induction on columns, that
\[[C^*_m(1)\cdots C^*_m(k)]=\begin{bmatrix}1&2&*&\cdots&*\\1&3&8&\cdots&*\\\vdots&\vdots&\vdots&\ddots&\vdots\\1&3&9&\cdots&3^k-1\\1&3&9&\cdots&3^k\end{bmatrix}\]
is a matrix whose $ij^{th}$ entry is $3^{j-1}$ if $i\geq j$, and it is $3^{j-1}-1$ if $j=i+1$. Subtracting $(r-1)^{th}$ row from $r^{th}$ for $r=k,k-1,\ldots,2$, respectively, we reach to an upper-triangular matrix with diagonal including of only $1$'s. This shows that $\det([C^*_m(1)\cdots C^*_m(k)])=1$. Therefore,
\[\det([C^*_m(n+1)\cdots C^*_m(n+k)])=\det({\T^*_m}^n[C^*_m(1)\cdots C^*_m(k)])=\det(\T^*_m)^n,\]
as required.
\end{proof}

Utilizing Lemma \ref{det(T^*_m)}, we obtain the following result for linear combinations of columns immediately. 
\begin{theorem}\label{columns linear combination}
Let $k=\ceil{m/2}$ and $[\alpha_1\cdots\alpha_k]^T$ be the solution to the matrix equation 
\[[C^*_m(1)\cdots C^*_m(k)][\alpha_1\cdots\alpha_k]^T=[C^*_m(k+1)]\]
inside the $m\times\infty$ table. Then
\begin{equation}\label{columns linear combination 1}
\C(n+k,i)=\alpha_1\C(n,i)+\cdots+\alpha_k\C(n+k-1,i)
\end{equation}
for all $1\leq i\leq m$ and $n\geq1$. As a result,
\begin{equation}\label{columns linear combination 2}
\I_m(n+k)=\alpha_1\I_m(n)+\cdots+\alpha_k\I_m(n+k-1)
\end{equation}
for all $n\geq1$. Moreover, these recurrence relations are of the minimum degree $k$, that is, all other recurrence relations for columns entries and columns sums can be derived from these recurrence relations.
\end{theorem}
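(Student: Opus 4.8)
The plan is to prove the theorem in three stages: first establish the recurrence \eqref{columns linear combination 1}, then deduce \eqref{columns linear combination 2} by summation, and finally prove the minimality of the degree $k$. For the first stage, I would work with the reduced columns $C^*_m(n)$ rather than the full columns, since the symmetry of the rows (exploited earlier via the reduced matrix $\T^*_m$) lets us recover all of $\C(n,i)$ from the first $k=\ceil{m/2}$ entries. The defining relation $\T^*_m C^*_m(n)=C^*_m(n+1)$ gives $C^*_m(n+j)={\T^*_m}^{\,j-1}C^*_m(n+1)$, so I would first prove the recurrence at the level of the single column vector: I want to show that
\[
C^*_m(n+k)=\alpha_1 C^*_m(n)+\cdots+\alpha_k C^*_m(n+k-1)
\]
for all $n\geq1$, where the $\alpha_i$ are exactly the coefficients defined by the matrix equation in the statement.

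The key observation for this first stage is that the matrix $M_n:=[C^*_m(n+1)\cdots C^*_m(n+k)]$ is invertible for every $n\geq0$: by the Corollary immediately preceding the theorem, $\det(M_n)=\det(\T^*_m)^{\,n}$, and Lemma \ref{det(T^*_m)} shows this determinant is always $\pm1$ or $\pm2$ times a power of $-1$, hence nonzero. Thus the $k$ vectors $C^*_m(n+1),\ldots,C^*_m(n+k)$ form a basis of $\mathbb{R}^k$ for each $n$. Defining the $\alpha_i$ from the case $n=0$ (the matrix equation in the hypothesis, which is exactly $M_0[\alpha_1\cdots\alpha_k]^T=C^*_m(k+1)$), I would then argue that the same coefficients work for every $n$: applying ${\T^*_m}^{\,n}$ to the identity $\alpha_1 C^*_m(1)+\cdots+\alpha_k C^*_m(k)=C^*_m(k+1)$ and using linearity of matrix multiplication together with $C^*_m(n+j)={\T^*_m}^{\,n}C^*_m(j)$ shifts the relation forward by $n$ and yields exactly \eqref{columns linear combination 1} on the reduced columns. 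Unfolding the reduced columns back to full columns gives \eqref{columns linear combination 1} for all $1\leq i\leq m$, and summing over $i$ (equivalently, summing over the doubled reduced entries, with the middle row counted once or twice according to the parity of $m$, as in Corollary \ref{I_m(n) by D(n,a) and M}) yields \eqref{columns linear combination 2}.

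The main obstacle, and the heart of the theorem, is the minimality claim. Here I would argue that $\alpha_1\neq0$, so that the recurrence genuinely has degree $k$ in the sense defined in the introduction; this is immediate from $\det(\T^*_m)\neq0$, since $\alpha_1$ is (up to sign) the ratio $\det(\T^*_m)^{\,1}/\det(\T^*_m)^{\,0}$ read off by Cramer's rule on the matrix equation, and more directly because $\T^*_m$ being invertible forces its characteristic polynomial to have nonzero constant term. To show no shorter recurrence exists, suppose the reduced columns satisfied a relation of degree $\ell<k$; then for some $n$ the vectors $C^*_m(n+1),\ldots,C^*_m(n+\ell)$ would span the forward orbit, forcing $C^*_m(n+1),\ldots,C^*_m(n+k)$ to be linearly dependent and contradicting $\det(M_n)=\det(\T^*_m)^{\,n}\neq0$. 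The cleanest framing is that the minimal recurrence degree for the vector sequence $\{C^*_m(n)\}$ equals the degree of the minimal polynomial of $\T^*_m$, and since the orbit matrix $M_0$ is invertible the orbit $\{C^*_m(1),\ldots,C^*_m(k)\}$ already spans $\mathbb{R}^k$, forcing the minimal polynomial to have degree exactly $k$; I expect the delicate point to be transferring this minimality statement from the reduced vector sequence down to the scalar sequences $\C(n,i)$ and $\I_m(n)$, which requires checking that no nontrivial scalar relation of smaller degree can hold for an individual entry without forcing a vector relation of the same degree — a step that uses the fact that each coordinate projection, combined with running over $i=1,\ldots,k$, recovers the whole vector.
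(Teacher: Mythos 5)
Your first two stages are sound and essentially the paper's own argument: the recurrence \eqref{columns linear combination 1} on the reduced columns comes from applying powers of $\T^*_m$ to the defining relation $[C^*_m(1)\cdots C^*_m(k)][\alpha_1\cdots\alpha_k]^T=C^*_m(k+1)$, the full columns follow by row symmetry, \eqref{columns linear combination 2} follows by summation, and minimality at the level of the vector sequence (equivalently, of a recurrence satisfied simultaneously by all entries) is exactly the invertibility of the matrices $[C^*_m(n+1)\cdots C^*_m(n+k)]$ guaranteed by the determinant corollary. Your observation that $\alpha_1\neq0$ (via Cramer's rule, or the nonzero constant term of the characteristic polynomial of the invertible matrix $\T^*_m$) is a worthwhile point that the paper leaves implicit.

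The gap is in the final step, which you yourself flag as ``the delicate point'' but then address with a mechanism that cannot work. The column sum $\I_m(n)$ is a fixed linear functional (a weighted sum) of the vector sequence, not a coordinate projection, and in general a linear functional of a vector sequence whose minimal recurrence degree is $k$ can satisfy a strictly shorter recurrence --- cancellation among coordinates is precisely what must be ruled out, and nothing in your sketch rules it out. The paper closes this with a specific identity: summing the basic recursion down a column and using $\C(n,m)=\C(n,1)$ gives $\I_m(n+1)=3\I_m(n)-2\C(n,1)$, i.e.\ $(2-\Delta)\I_m(n)=2\C(n,1)$. Hence if $P(\Delta)\I_m(n)=0$ with $\deg P=k'<k$, then, since polynomials in $\Delta$ commute, $2P(\Delta)\C(n,1)=(2-\Delta)P(\Delta)\I_m(n)=0$, so $\C(n,1)$ satisfies the same short recurrence; by Lemma \ref{two rows equivalence by M'} (which gives $\C(n,b)=\M'(b-1)\C(n,1)$ for $b\leq k$) every entry then satisfies it, hence so does the vector $C^*_m(n)$, contradicting the invertibility of $[C^*_m(n+1)\cdots C^*_m(n+k)]$. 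The same $\M'$-operator lemma is what legitimizes the transfer for individual entries: a short annihilator of $\C(n,1)$ spreads to all $\C(n,i)$, whereas your phrase ``each coordinate projection, combined with running over $i=1,\ldots,k$, recovers the whole vector'' applies only when all coordinates are assumed to satisfy one and the same relation, which is not the hypothesis you need to refute. Without the identity $(2-\Delta)\I_m(n)=2\C(n,1)$ (or a substitute, e.g.\ a spectral argument showing this functional kills no eigencomponent of $\T^*_m$), the minimality claim for \eqref{columns linear combination 2} remains unproved.
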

\begin{proof}
The equality of \eqref{columns linear combination 1} follows that of Corollary \ref{det(T^*_m)}, and the equality \eqref{columns linear combination 2} is a consequence of \eqref{columns linear combination 1}.

Now, we show that the recurrence relations \eqref{columns linear combination 1} and \eqref{columns linear combination 2} have minimum degree. The fact that the recurrence relation \eqref{columns linear combination 1} has minimum degree is obvious since the matrices $[C^*_m(n)\cdots C^*_m(n+k-1)]$ are invertible and their columns are linearly independent. We use this fact to show that the recurrence relation \eqref{columns linear combination 2} has minimum degree too. First observe that $3\I_m(n)=\I_m(n+1)+2\C(n,1)$ so that $(2-\Delta)\I_m(n)=2\C(n,1)$. If there exists a recurrence relation of degree $k'<k$ for $\I_m(n)$, then the same recurrence relation holds for $(2-\Delta)\I_m(n)$ and hence $\C(n,1)$ (and consequently $\C(n,i)$ for all $1\leq i\leq m$ by Lemma \ref{two rows equivalence by M'}) satisfies the same recurrence relation of degree $k'$, which is a contradiction. The proof is complete.
\end{proof}
\begin{corollary}\label{equivalent minimal polynomials}
For every $m\geq1$, the following polynomials are equal:
\begin{itemize}
\item[(1)]$\det(xI-\T^*_m)$;
\item[(2)]$\M_m(k)(x-1)$;
\item[(3)]$x^k-[1\cdots x^{k-1}][C^*_m(1)\cdots C^*_m(k)]^{-1}[C^*_m(k+1)]$,
\end{itemize}
where $k=\ceil{m/2}$.
\end{corollary}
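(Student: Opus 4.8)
The plan is to prove the two equalities $(1)=(3)$ and $(1)=(2)$ separately, in each case relating the given polynomial to the characteristic polynomial $f_k(x):=\det(xI-\T^*_m)$ of the reduced matrix, where $k=\ceil{m/2}$. Throughout I read $(3)$ as the associated polynomial of the minimal column recurrence: since Theorem \ref{columns linear combination} identifies $[\alpha_1\cdots\alpha_k]^T=[C^*_m(1)\cdots C^*_m(k)]^{-1}[C^*_m(k+1)]$, the product $[1\cdots x^{k-1}][C^*_m(1)\cdots C^*_m(k)]^{-1}[C^*_m(k+1)]$ is simply $\alpha_1+\alpha_2x+\cdots+\alpha_kx^{k-1}$, so $(3)$ equals $x^k-\sum_{i=1}^k\alpha_ix^{i-1}$. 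I read $(2)$ as the polynomial obtained from $\M_m(k)$, viewed as a polynomial in the formal symbol $\Delta$, by the substitution $\Delta\mapsto x-1$; a degree count (each $\M_m(n)$ is monic of degree $n$ in $\Delta$) confirms that this is a monic polynomial of degree $k$, matching $f_k$.

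For $(1)=(3)$, I would set $A:=\T^*_m$ and $v:=C^*_m(1)$ and recall $C^*_m(n+1)=AC^*_m(n)$, whence $C^*_m(i)=A^{i-1}v$ for $1\le i\le k$ and $C^*_m(k+1)=A^kv$. The defining relation $C^*_m(k+1)=\sum_{i=1}^k\alpha_iC^*_m(i)$ then reads $g(A)v=0$, where $g$ is the polynomial $(3)$. The key point is that the corollary preceding this statement shows $\det[C^*_m(1)\cdots C^*_m(k)]=1$, so the vectors $v,Av,\ldots,A^{k-1}v$ are linearly independent; thus $v$ is a cyclic vector for $A$, and no nonzero polynomial of degree $<k$ annihilates it. Hence the monic degree-$k$ polynomial $g$ is the minimal polynomial of the vector $v$ (the monic generator of $\set{p:p(A)v=0}$), which for a cyclic vector of a $k\times k$ matrix coincides with both the minimal polynomial and the characteristic polynomial of $A$. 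This gives $g=f_k$, i.e.\ $(1)=(3)$.

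For $(1)=(2)$, I would show that both sides obey the same three-term recurrence with the same two initial values. Substituting $\Delta\mapsto x-1$ into $\M_m(n+2)=\Delta\M_m(n+1)-\M_m(n)$ turns the right-hand side of $(2)$ into a family $g_k(x)$ satisfying $g_k=(x-1)g_{k-1}-g_{k-2}$. On the matrix side, expanding $\det(xI-\T^*_m)$ along its first row (exactly as in the proof of Lemma \ref{det(T^*_m)}, but now with the variable diagonal $x-1$) and then expanding the resulting off-diagonal minor along its first column yields $f_k=(x-1)f_{k-1}-f_{k-2}$ for $k\ge3$, valid uniformly for the $\O$- and $\E$-families since the doubled corner entry lies in the last row and is untouched by this top-left expansion. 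It then remains to check the base cases by direct computation and match them against $g_1=\M_m(1)|_{\Delta=x-1}$ and $g_2=\M_m(2)|_{\Delta=x-1}$: for odd $m$ one finds $f_1=x-1$ and $f_2=(x-1)^2-2$, and for even $m$ one finds $f_1=x-2$ and $f_2=(x-1)(x-2)-1$. Since the recurrences and the initial data agree, induction gives $f_k=g_k$, i.e.\ $(1)=(2)$.

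I expect the main obstacle to be the step $(1)=(3)$—not the algebra but the justification that the annihilating polynomial $g$ is \emph{exactly} the characteristic polynomial rather than merely a factor or multiple of it. This hinges on the cyclicity of $v=C^*_m(1)$, which I obtain for free from $\det[C^*_m(1)\cdots C^*_m(k)]=1$; I must be careful to invoke this linear independence to pin down the degree of the minimal polynomial of $v$ and thereby force $g$, the minimal polynomial, and the characteristic polynomial of $\T^*_m$ to coincide. The computation in $(1)=(2)$ is routine once the recurrence is set up, the only care needed being the correct interpretation of the notation $\M_m(k)(x-1)$ as the substitution $\Delta\mapsto x-1$ and the bookkeeping of the corner entry in the cofactor expansion.
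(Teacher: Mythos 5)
Your proof is correct and matches the paper's own argument in all essentials: for the equality of (1) and (2) the paper uses exactly your three-term recurrence $\det(xI-\T^*_m)$-expansion, namely $f_k=(x-1)f_{k-1}-f_{k-2}$ with the same base cases matched against the defining recurrence of $\M_m$ under $\Delta\mapsto x-1$, and for the equality of (1) and (3) it likewise plays the monic degree-$k$ annihilator of the columns against the characteristic polynomial. The only cosmetic difference is in the latter step: where you invoke cyclicity of $v=C^*_m(1)$ (obtained from the corollary giving $\det[C^*_m(1)\cdots C^*_m(k)]=1$) to force the two monic degree-$k$ polynomials to coincide, the paper cites Cayley--Hamilton together with the minimality clause of Theorem \ref{columns linear combination}, which rests on the very same linear independence of the columns.
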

\begin{proof}
First we prove the equality of (1) and (2). Analogous to Lemma \ref{det(T^*_m)}, one can easily show that 
\[\det(xI-\O_k)=(x-1)\det(xI-\O_{k-1})-\det(xI-\O_{k-2})\]
and
\[\det(xI-\E_k)=(x-1)\det(xI-\E_{k-1})-\det(xI-\E_{k-2})\]
for all $k\geq3$. Since $\det(xI-\O_1)=x-1=\M_o(1)(x-1)$, $\det(xI-\O_2)=(x-1)^2-2=\M_o(2)(x-1)$, $\det(xI-\E_1)=x-2=\M_e(1)(x-1)$, and $\det(xI-\O_2)=(x-1)^2-(x-1)-1=\M_e(2)(x-1)$, it follows that $\det(xI-\O_k)=\M_o(k)(x-1)$ and $\det(xI-\E_k)=\M_e(k)(x-1)$ for all $k\geq1$. Thus,
\[\det(xI-\T^*_m)=\M_m(k)(x-1)\]
for all $m\geq1$.

Now, we show the equality of (1) and (3). Since $\T^*_mC^*_m(n)=C^*_m(n+1)$ and $\T^*_m$ satisfies its characteristic polynomial $\det(xI-\T^*_m)=0$, it follows that $\det((\Delta+1)I-\T^*_m)C^*_m(n)=0$, where $\Delta+1$ is the shift operator sending $C^*_m(n)$ to $C^*_m(n+1)$. Thus, $C^*_m(n)$ satisfies the recurrence relation $\det((\Delta+1)I-\T^*_m)C^*_m(n)=0$ of degree $k$. On the other hand, by Theorem \ref{columns linear combination}, $C^*_m(n)$ satisfies the recurrence relation 
\[((\Delta+1)^k-[1\cdots (\Delta+1)^{k-1}][C^*_m(1)\cdots C^*_m(k)]^{-1}[C^*_m(k+1)])C^*_m(n)=0\]
of minimum degree $k$ arising from the polynomial in (3). Hence, the two recurrence relations, having the same degrees and leading coefficients, must be identical, which implies that the polynomials (1) and (3) must be equal. The proof is complete.
\end{proof}
\begin{corollary}\label{columns linear combination: short form}
The recurrence relations \eqref{columns linear combination 1} and \eqref{columns linear combination 2} are given by any of the formulas
\[\det((\Delta+1)I-\T_m^*)\C(n,i)=0\quad\text{or}\quad\M_m(k)\C(n,i)=0\]
and
\[\det((\Delta+1)I-\T_m^*)\I_m(n)=0\quad\text{or}\quad\M_m(k)\I_m(n)=0,\]
respectively, for all $i=1,\ldots,m$ and $n\geq1$, where $k=\ceil{m/2}$.
\end{corollary}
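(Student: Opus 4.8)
The plan is to recognize that this corollary is a direct reformulation of Theorem \ref{columns linear combination} by means of the polynomial identities of Corollary \ref{equivalent minimal polynomials}, once every polynomial is read as a difference operator through the substitution $x\mapsto\Delta+1$. The starting observation is that $\Delta+1$ is exactly the shift operator on sequences: since $\Delta a(n)=a(n+1)-a(n)$, we have $(\Delta+1)a(n)=a(n+1)$, so $(\Delta+1)^j$ shifts the index by $j$. This applies both to $\C(n,i)$ (in its first argument) and to $\I_m(n)$.

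First I would put the recurrence \eqref{columns linear combination 1} into operator form. Writing $[\alpha_1\cdots\alpha_k]^T=[C^*_m(1)\cdots C^*_m(k)]^{-1}[C^*_m(k+1)]$ as in Theorem \ref{columns linear combination}, the identity $\C(n+k,i)=\alpha_1\C(n,i)+\cdots+\alpha_k\C(n+k-1,i)$ is equivalent to
\[\left((\Delta+1)^k-[1\cdots(\Delta+1)^{k-1}][C^*_m(1)\cdots C^*_m(k)]^{-1}[C^*_m(k+1)]\right)\C(n,i)=0,\]
since $(\Delta+1)^j\C(n,i)=\C(n+j,i)$. This is precisely polynomial (3) of Corollary \ref{equivalent minimal polynomials} evaluated at $x=\Delta+1$ and applied to $\C(n,i)$. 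By \eqref{columns linear combination 2}, the sequence $\I_m(n)$ obeys the same recurrence with the same coefficients, so the same operator annihilates $\I_m(n)$.

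Next I would invoke the polynomial equalities of Corollary \ref{equivalent minimal polynomials}, which assert that polynomial (3) coincides with $\det(xI-\T^*_m)$ and with $\M_m(k)(x-1)$. Substituting $x=\Delta+1$ into these three equal polynomials turns polynomial (3) into the operator displayed above, turns $\det(xI-\T^*_m)$ into $\det((\Delta+1)I-\T^*_m)$, and turns $\M_m(k)(x-1)$ into $\M_m(k)\parn{(\Delta+1)-1}=\M_m(k)(\Delta)$, which is just the difference operator $\M_m(k)$. Hence all three operators are literally identical, and applying any one of them to $\C(n,i)$ (for every $1\leq i\leq m$, as guaranteed by Theorem \ref{columns linear combination}) or to $\I_m(n)$ reproduces exactly the recurrence relations \eqref{columns linear combination 1} and \eqref{columns linear combination 2}.

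There is no serious obstacle here; the only point requiring care is the two slightly different meanings carried by the symbol $\M_m(k)$. In Corollary \ref{equivalent minimal polynomials} the expression $\M_m(k)(x-1)$ denotes the polynomial $\M_m(k)$ in the variable $\Delta$ with $\Delta$ replaced by $x-1$, whereas in the present statement $\M_m(k)$ denotes that same polynomial applied as a difference operator to the sequence. The substitution $x=\Delta+1$ is exactly what reconciles these two readings and identifies $\det((\Delta+1)I-\T^*_m)$ with $\M_m(k)$ as operators, which completes the argument.
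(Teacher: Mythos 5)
Your proposal is correct and is essentially the paper's own (implicit) justification: the paper states this corollary without proof precisely because, as you observe, it amounts to reading the three equal polynomials of Corollary \ref{equivalent minimal polynomials} as difference operators via the substitution $x\mapsto\Delta+1$, where $\Delta+1$ is the shift operator — exactly the identification already made in the paper's proof of Corollary \ref{equivalent minimal polynomials}. Your care about the two readings of $\M_m(k)$ (polynomial in $x-1$ versus operator in $\Delta$) is the right point to flag and is handled correctly.
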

\section{The multipliers $\M_m$ and $\M'$}
This section is devoted to the study of the multipliers $\M_m$ and $\M'$. As we shall see in the sequel, the multiplies $\M_m$ and $\M'$ on the values $a+b$ and $ab$ ($a,b\geq1$) can be obtain by their values on $a$ and $b$, respectively. These results will be applied to show that these multipliers admit a factorization property like natural numbers.
\begin{theorem}
For any $m,k\geq1$ and $1\leq a\leq k$, we have
\[\M'(a-1)\M_m(k-1)\equiv\M_m(k-a)\pmod{\M_m(k)}.\]
\end{theorem}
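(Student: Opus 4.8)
The plan is to establish the congruence $\M'(a-1)\M_m(k-1)\equiv\M_m(k-a)\pmod{\M_m(k)}$ by viewing both $\M_m$ and $\M'$ as polynomials in the difference operator $\Delta$, and then reducing modulo the polynomial $\M_m(k)$. The key observation is that the three-term recurrences defining $\M_m$ and $\M'$ are identical in form, namely $f(n+2)=\Delta f(n+1)-f(n)$; only the initial data differ. This suggests working inside the polynomial ring $\Z[\Delta]$ and treating congruences modulo the ideal generated by $\M_m(k)=\M_m(k)(\Delta)$.

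First I would fix $m$ and $k$ and prove the statement by induction on $a$, running $a$ from $1$ up to $k$. For the base case $a=1$, the claim reads $\M'(0)\M_m(k-1)=\M_m(k-1)\equiv\M_m(k-1)\pmod{\M_m(k)}$, since $\M'(0)=1$; this is trivially an equality, not merely a congruence. For a second anchoring case $a=2$, one uses $\M'(1)=\Delta$ and must check $\Delta\,\M_m(k-1)\equiv\M_m(k-2)\pmod{\M_m(k)}$. But the defining recurrence gives $\M_m(k)=\Delta\M_m(k-1)-\M_m(k-2)$, so $\Delta\M_m(k-1)-\M_m(k-2)=\M_m(k)\equiv 0$, which is exactly the $a=2$ case. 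This is the crucial structural identity, and it is where the congruence (rather than equality) first becomes essential.

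For the inductive step, I would assume the congruence holds for $a-1$ and $a$ and deduce it for $a+1$. The engine is that both sides satisfy the same recurrence in the relevant index. On the left, $\M'(a)=\Delta\M'(a-1)-\M'(a-2)$ multiplies $\M_m(k-1)$ to give
\[
\M'(a)\M_m(k-1)=\Delta\,\M'(a-1)\M_m(k-1)-\M'(a-2)\M_m(k-1).
\]
On the right, the quantity $\M_m(k-a)$ satisfies $\M_m(k-a-1)=\Delta\,\M_m(k-a)-\M_m(k-a+1)$ rewritten as $\M_m(k-(a+1))=\Delta\,\M_m(k-a)-\M_m(k-(a-1))$, which comes directly from the defining recurrence of $\M_m$ read in decreasing index. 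Applying the two inductive hypotheses $\M'(a-1)\M_m(k-1)\equiv\M_m(k-a)$ and $\M'(a-2)\M_m(k-1)\equiv\M_m(k-(a-1))$ modulo $\M_m(k)$, and substituting into the displayed expression, one obtains
\[
\M'(a)\M_m(k-1)\equiv\Delta\,\M_m(k-a)-\M_m(k-(a-1))\equiv\M_m(k-(a+1))\pmod{\M_m(k)},
\]
completing the step.

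The main obstacle I anticipate is not the algebra of the induction but keeping the two recurrences correctly aligned: $\M'$ is indexed forward (its index increases with $a$) while the target $\M_m(k-a)$ is indexed backward (its index decreases as $a$ grows), so I must verify that the three-term recurrence for $\M_m$ holds when read in the decreasing direction and that the index ranges stay within $1\le a\le k$ so that no term $\M_m(\text{negative})$ or out-of-range value of $\M'$ is invoked. A clean way to sidestep bookkeeping errors is to note that, modulo $\M_m(k)$, the operator $\Delta$ acts as a root of the polynomial $\M_m(k)(\Delta)$, so both $\M'(a-1)\M_m(k-1)$ and $\M_m(k-a)$ are images of a single recurrence sequence in the quotient ring $\Z[\Delta]/(\M_m(k))$ with matching initial values at $a=1$ and $a=2$; uniqueness of solutions to the second-order recurrence then forces them to agree for all $a$ in range. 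I expect the induction above to be the most transparent presentation, with the quotient-ring viewpoint available as a conceptual check.
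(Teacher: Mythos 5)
Your proof is correct, but it takes a genuinely different route from the paper. You work entirely inside the polynomial ring $\Z[\Delta]$ and run a second-order induction on $a$: the base cases $a=1,2$ follow from $\M'(0)=1$, $\M'(1)=\Delta$ and the defining relation $\M_m(k)=\Delta\M_m(k-1)-\M_m(k-2)$, and the step uses only that $\M'(a-1)\M_m(k-1)$ and $\M_m(k-a)$ satisfy the same three-term recurrence $f(a+1)=\Delta f(a)-f(a-1)$ in the quotient ring $\Z[\Delta]/(\M_m(k))$; your index bookkeeping ($a\geq 2$ for the step, $a+1\leq k$ so no negative indices of $\M_m$ appear) checks out. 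The paper instead argues combinatorially through the table: it applies $\M'(a-1)\C(n,1)=\C(n,a)$ (Lemma \ref{two rows equivalence by M'}) and $\M_m(k-1)\C(n,a)=\M_m(k-a)\C(n,1)$ (Lemma \ref{two rows equivalence by M}) to conclude that $\M'(a-1)\M_m(k-1)-\M_m(k-a)$ annihilates the sequence $\C(n,1)$, then invokes the minimality of $\M_m(k)$ as an annihilating polynomial (Corollary \ref{columns linear combination: short form}) to get divisibility, and finally transfers from the special value $k=\ceil{m/2}$ to arbitrary $k$ using the fact that $\M_m$ depends only on the parity of $m$. The trade-off: the paper's argument is a few lines given its machinery and explains the congruence's combinatorial meaning (row transfers in the table), but it leans on the nontrivial minimality theorem and needs the parity-transfer step; your argument is self-contained, needs nothing beyond the defining recurrences, covers all $m,k\geq 1$ uniformly and directly, and in principle yields the explicit quotient polynomial, at the cost of divorcing the identity from the lattice-path interpretation.
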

\begin{proof}
Let $m\geq1$. The result for $k=\ceil{m/2}$ follows from the facts that 
\[\M'(a-1)\C(n,1)=\C(n,a),\quad\M_m(k-a)\C(n,1)=\M(k-1)\C(n,a),\]
and $\M_m(k)$ is the polynomial of minimum degree satisfying $\M_m(k)\C(n,1)=0$ by Corollary \ref{columns linear combination: short form}. Since $\M_m$ depends only on the parity of $m$, and $k$ takes any number as long as $m$ takes all odd or even numbers, the result holds for any choice of $m$ and $k$.
\end{proof}
\begin{theorem}\label{M_m(a+b) and M'(a+b)}
For any $m\geq1$ and $a,b\geq1$, we have
\[\M_m(a+b)=\M'(a)\M_m(b)-\M'(a-1)\M_m(b-1)\]
and
\[\M'(a+b)=\M'(a)\M'(b)-\M'(a-1)\M'(b-1).\]
\end{theorem}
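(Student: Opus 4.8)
The plan is to deduce both identities at once from a single \emph{addition formula} valid for every sequence satisfying the common recurrence $p(n+2)=\Delta p(n+1)-p(n)$. The crucial observation is that $\M_m$ and $\M'$ differ only in their initial data: both obey this second-order recurrence for all $n\geq0$ by definition. So I would first isolate the following lemma: if $\{s(n)\}$ is any sequence with $s(n+2)=\Delta s(n+1)-s(n)$ for all $n\geq0$, then for all $a,b\geq1$
\[s(a+b)=\M'(a)s(b)-\M'(a-1)s(b-1).\]
Granting this, the second identity is the case $s=\M'$ and the first is the case $s=\M_m$, since both of these sequences satisfy the required recurrence.

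To prove the lemma I would fix $b\geq1$ and induct on $a$, recording first that $\M'(0)=1$, $\M'(1)=\Delta$, and $\M'(2)=\Delta^2-1$. When $a=1$ the right-hand side is $\Delta s(b)-s(b-1)$, which equals $s(b+1)$ by the recurrence for $s$ at index $b-1$; when $a=2$ it is $(\Delta^2-1)s(b)-\Delta s(b-1)$, which a one-line expansion of $s(b+2)=\Delta s(b+1)-s(b)$ shows equals $s(b+2)$. For the inductive step I observe that the two sides, viewed as functions of $a$, each satisfy $h(a+2)=\Delta h(a+1)-h(a)$: the left side because $a\mapsto s(a+b)$ is a shift of $s$, and the right side because it is a fixed linear combination, with coefficients $s(b)$ and $-s(b-1)$, of the two solutions $\M'(a)$ and $\M'(a-1)$. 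Two solutions of the same second-order recurrence that agree at two consecutive values of $a$ agree for all $a$, so the identity propagates to every $a\geq1$.

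An equivalent and perhaps more transparent route for the $\M'$-identity alone is the matrix formulation: setting $A=\begin{bmatrix}\Delta&-1\\1&0\end{bmatrix}$, one checks by induction that $A^{n}=\begin{bmatrix}\M'(n)&-\M'(n-1)\\\M'(n-1)&-\M'(n-2)\end{bmatrix}$, whence reading off the $(1,1)$-entry of $A^{a+b}=A^{a}A^{b}$ gives $\M'(a+b)=\M'(a)\M'(b)-\M'(a-1)\M'(b-1)$ directly. I nevertheless prefer the lemma above, because it handles $\M_m$ and $\M'$ on an equal footing and avoids passing through $\M_m$'s expansion in terms of $\M'$ (although the relations $\M_o(n)=\M'(n)-\M'(n-2)$ and $\M_e(n)=\M'(n)-\M'(n-1)$, obtained from the initial conditions, would yield a second proof of the first identity by substitution into the already-established $\M'$-identity).

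I expect the only genuine subtlety to be the boundary bookkeeping: one must confirm that the instances of the recurrence invoked in the base cases, namely at indices $b-1$ and $b$, are legitimate, and this holds precisely because $b\geq1$ and the defining recurrences for both $\M_m$ and $\M'$ are valid from $n=0$ onward. Past that point the argument is purely formal, since $\Delta$ enters only as an indeterminate manipulated through the recurrences.
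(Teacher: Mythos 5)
Your proof is correct and is essentially the paper's own argument: the paper disposes of the theorem with ``induction on $a$,'' and your lemma is exactly that induction, since checking the base cases $a=1,2$ and observing that both sides satisfy $h(a+2)=\Delta h(a+1)-h(a)$ in $a$ is just the two-step inductive argument written invariantly. Your packaging of both identities into one statement about arbitrary solutions of the common recurrence $s(n+2)=\Delta s(n+1)-s(n)$, and the alternative via powers of the matrix $\begin{bmatrix}\Delta&-1\\1&0\end{bmatrix}$, are clean but inessential refinements of the same idea.
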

\begin{proof}
The result follows by induction on $a$.
\end{proof}

Utilizing the same techniques used in the proof of Lemma \ref{two rows equivalence by M}, one can show the following general result.
\begin{lemma}\label{general multiplier by M}
Let $m\geq1$, $k=\ceil{m/2}$, $0\leq a\leq k$, and $0\leq b\leq\min\{a,k-a\}$. Then 
\[\M_o(b)\C(n,a)=\C(n,a-b)+\C(n,a+b).\]
\end{lemma}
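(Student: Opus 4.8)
The plan is to prove the identity
\[
\M_o(b)\C(n,a)=\C(n,a-b)+\C(n,a+b)
\]
by induction on $b$, holding $a$ fixed, exactly as in the proof of Lemma \ref{two rows equivalence by M}. The base cases are immediate: for $b=0$ we have $\M_o(0)=2$ and the right-hand side is $2\C(n,a)$, while for $b=1$ we have $\M_o(1)=\Delta$, and the defining recurrence for the table gives $\Delta\C(n,a)=\C(n,a-1)+\C(n,a+1)$, valid precisely because the hypothesis $1\le b\le\min\{a,k-a\}$ forces $1<a<m$ (so that neither neighbor $a-1$ nor $a+1$ falls outside the admissible range $1,\ldots,m$). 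This is the key role played by the constraint $0\le b\le\min\{a,k-a\}$: it guarantees that every index $a-b,\ldots,a+b$ appearing throughout the induction stays within $1,\ldots,m$, so the symmetric neighbor formula for $\Delta\C$ applies without boundary corrections.

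For the inductive step, I would assume the identity holds for all exponents up to some $b$ with $b+1\le\min\{a,k-a\}$, and compute using the defining recurrence $\M_o(b+1)=\Delta\M_o(b)-\M_o(b-1)$. Applying $\Delta$ to the inductive hypothesis $\M_o(b)\C(n,a)=\C(n,a-b)+\C(n,a+b)$ and using the neighbor formula on each term gives
\begin{align*}
\Delta\M_o(b)\C(n,a)&=\Delta\C(n,a-b)+\Delta\C(n,a+b)\\
&=\C(n,a-b-1)+\C(n,a-b+1)+\C(n,a+b-1)+\C(n,a+b+1).
\end{align*}
The middle two terms $\C(n,a-b+1)+\C(n,a+b-1)$ are exactly $\M_o(b-1)\C(n,a)$ by the inductive hypothesis at exponent $b-1$, so subtracting $\M_o(b-1)\C(n,a)$ leaves
\[
\M_o(b+1)\C(n,a)=\C(n,a-(b+1))+\C(n,a+(b+1)),
\]
closing the induction.

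The main obstacle is ensuring the boundary conditions never interfere. The neighbor formula $\Delta\C(n,a)=\C(n,a-1)+\C(n,a+1)$ holds cleanly only for interior rows $1<a<m$; at the extreme rows it degenerates. When I apply $\Delta$ to $\C(n,a-b)$ and $\C(n,a+b)$ in the inductive step, I must know that $a-b$ and $a+b$ are themselves interior indices, which requires $a-b>1$ and $a+b<m$, equivalently $b<a$ and $b<m-a$. Since $k=\ceil{m/2}$ and $m-a\ge m-k\ge k-1$ when $a\le k$, the hypothesis $b+1\le\min\{a,k-a\}$ comfortably implies both inequalities, so every application of the neighbor formula is legitimate. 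Verifying this bookkeeping carefully — that the constraint $b\le\min\{a,k-a\}$ propagates correctly through the induction and keeps all indices strictly interior — is the only delicate point; the algebra itself is the same cancellation already exhibited in Lemma \ref{two rows equivalence by M}.
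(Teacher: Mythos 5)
Your overall strategy --- induction on $b$ via the recurrence $\M_o(b+1)=\Delta\M_o(b)-\M_o(b-1)$, with base cases $\M_o(0)=2$ and $\M_o(1)=\Delta$ --- is exactly what the paper intends: it gives no details for this lemma, saying only that the techniques of Lemma \ref{two rows equivalence by M} apply, and that is what you do; your cancellation in the inductive step is also algebraically correct. However, the boundary bookkeeping, which you yourself single out as the only delicate point, is wrong as stated. The hypothesis $1\leq b\leq\min\{a,k-a\}$ does \emph{not} force $1<a<m$: it forces only $1\leq a\leq k-1$, and the pair $a=1$, $b=1$ is admissible whenever $k\geq2$. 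Likewise, in the inductive step, $b+1\leq\min\{a,k-a\}$ gives only $a-b\geq1$, not $a-b>1$; your claimed equivalence ``$a-b>1$, equivalently $b<a$'' is an algebra slip, since $a-b>1$ means $b<a-1$. So when $b+1=a$ the index $a-b=1$ is a boundary row, and when $b=a$ (which the lemma's hypothesis permits) the conclusion itself contains the symbol $\C(n,0)$ --- the indices do \emph{not} all stay inside $1,\ldots,m$, contrary to your claim.

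None of this kills the proof, but the repair has to be said explicitly: the paper's standing convention $\C(x,0)=\C(x,m+1)=0$ makes the neighbor formula $\Delta\C(n,y)=\C(n,y-1)+\C(n,y+1)$ valid for \emph{every} row $1\leq y\leq m$, boundary rows included (for $y=1$ both sides equal $\C(n,2)$). With that observation interiority is never needed: under your hypotheses every index occurring in the induction lies in $\{0,1,\ldots,k\}$, the convention interprets $\C(n,0)$ as $0$, and your two base cases together with the displayed cancellation then go through verbatim, including the edge cases $a=b$ and $b+1=a$ where your interiority argument fails. Replace the false interiority claims by an appeal to this convention and the proof is complete and faithful to the paper's intended argument.
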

\begin{theorem}\label{M_o(b)M_m(a) and M_o(b)M'(a)}
For any $m\geq1$ and $a\geq b\geq0$, we have
\[\M_o(b)\M_m(a)=\M_m(a+b)+\M_m(a-b)\]
and
\[\M_o(b)\M'(a)=\M'(a+b)+\M'(a-b).\]
\end{theorem}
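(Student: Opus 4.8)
The plan is to prove both identities simultaneously by induction on $b$, exploiting the fact that $\M_m$, $\M_o$, and $\M'$ all satisfy the \emph{same} three-term recurrence $f(n+2)=\Delta f(n+1)-f(n)$ in the single variable $\Delta$; only the seed values differ. Since every $\M_m(n)$, $\M_o(n)$, and $\M'(n)$ is a polynomial in $\Delta$, all of these operators commute, and that is exactly what makes the induction go through. I would treat the two identities in parallel, writing $\F$ for either $\M_m$ or $\M'$: in both cases the right-hand side is $\F(a+b)+\F(a-b)$ and the governing recurrence is the one above.

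For the base cases $b=0$ and $b=1$: when $b=0$ the claim reads $\M_o(0)\F(a)=2\F(a)$, which is immediate from $\M_o(0)=2$; when $b=1$ it reads $\M_o(1)\F(a)=\Delta\F(a)=\F(a+1)+\F(a-1)$, which is precisely the defining recurrence rewritten as $\Delta\F(a)=\F(a+1)+\F(a-1)$ (valid because $a\geq b=1$). For the inductive step, assuming the identity for $b-1$ and $b$ and all admissible $a$, I would apply the recurrence $\M_o(b+1)=\Delta\M_o(b)-\M_o(b-1)$ to $\F(a)$ and use commutativity to pull $\Delta$ across, obtaining $\M_o(b+1)\F(a)=\Delta\parn{\M_o(b)\F(a)}-\M_o(b-1)\F(a)$. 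Substituting the two induction hypotheses turns the right-hand side into $\Delta\parn{\F(a+b)+\F(a-b)}-\parn{\F(a+b-1)+\F(a-b+1)}$, and applying $\Delta\F(k)=\F(k+1)+\F(k-1)$ to each of $\F(a+b)$ and $\F(a-b)$ yields four terms, two of which cancel against the subtracted pair, leaving precisely $\F(a+b+1)+\F(a-b-1)=\F(a+(b+1))+\F(a-(b+1))$.

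The only real subtlety, and the main thing to verify carefully, is the boundary bookkeeping on the indices. To invoke $\Delta\F(k)=\F(k+1)+\F(k-1)$ one needs $k\geq1$ so that $k-1$ stays nonnegative; here the step hypothesis $a\geq b+1$ guarantees $a-b\geq1$, so $\F(a-b-1)$ never has a negative index, while $a+b\geq1$ is automatic. The hypothesis at $b-1$ requires only $a\geq b-1$, which also holds. I would record these inequalities explicitly so the induction never references an undefined $\F(-1)$.

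As an independent check, and in fact an alternative proof, one can substitute $\Delta=2\cos\theta$, under which the common recurrence is solved by $\M_o(n)=2\cos(n\theta)$, $\M'(n)=\sin((n+1)\theta)/\sin\theta$, and $\M_e(n)=\cos((n+\tfrac{1}{2})\theta)/\cos(\tfrac{\theta}{2})$. Each identity then collapses to the elementary product-to-sum formula $2\cos(b\theta)\cos(\xi)=\cos(\xi+b\theta)+\cos(\xi-b\theta)$; this is reassuring because it requires no case split on the parity of $m$, the factor $\M_o(b)$ being $2\cos(b\theta)$ in every case, which explains structurally why a single induction handles both $\M_m=\M_o$ and $\M_m=\M_e$ at once.
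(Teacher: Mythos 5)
Your proof is correct, and it takes a genuinely different route from the paper's. The paper proves only the first identity (asserting the second follows ``analogously''), and does so combinatorially: since $\M_m$ depends only on the parity of $m$, it picks $m$ with $k:=\ceil{m/2}>a+b$, uses Lemma \ref{two rows equivalence by M} and Lemma \ref{general multiplier by M} to show that the operator $\M_o(b)\M_m(a)-\M_m(a+b)-\M_m(a-b)$ annihilates the column entry $\C(n,k)$, and then concludes from the minimality statement of Corollary \ref{columns linear combination: short form} that this operator, having degree less than $k$, must vanish identically. Your induction on $b$ instead works entirely inside the commutative ring of polynomials in $\Delta$: the base cases $\M_o(0)=2$ and $\M_o(1)=\Delta$, the shared recurrence rewritten as $\Delta\F(j)=\F(j+1)+\F(j-1)$ for $j\geq1$, and the two-step induction handle $\M_o$, $\M_e$, and $\M'$ uniformly; the boundary bookkeeping (using $a\geq b+1$ to keep $a-b-1\geq0$) is exactly the point that needs care, and you got it right. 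What your route buys: a self-contained, elementary proof that needs neither the lattice-path lemmas (note that Lemma \ref{general multiplier by M}, on which the paper's proof rests, is itself only sketched in the paper) nor the nontrivial minimality theorem, that proves both identities simultaneously, and that applies verbatim to any solution of $f(n+2)=\Delta f(n+1)-f(n)$. What the paper's route buys: it exhibits the identity as a direct consequence of the structure of the table, in line with the paper's theme that operator identities mirror row and column relations. Your closing substitution $\Delta=2\cos\theta$ is also a legitimate second proof, not merely a check --- the identities then hold for infinitely many values of $\Delta$, hence as polynomial identities --- and it neatly explains the paper's concluding remark relating $\M_o$, $\M_e$, and $\M'$ to Chebyshev, Fibonacci, and Lucas polynomials.
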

\begin{proof}
We prove only the first equality for the second equality follows analogously. Since $\M_m$ depends only on the parity of $m$, one can choose $m$ such that $k:=\ceil{m/2}>a+b$. By Lemma \ref{two rows equivalence by M},
\[\M_m(a)\C(n,k)=\M(0)\C(n,k-a).\]
Now since $b\leq\min\{a,k-a\}$, Lemma \ref{general multiplier by M} yields
\begin{align*}
\M_o(b)\M_m(a)\C(n,k)&=\M(0)\M_o(b)\C(n,k-a)\\
&=\M(0)(\C(n,k-a-b)+\C(n,k-a+b)).
\end{align*}
On the other hand,
\[(\M_m(a+b)+\M_m(a-b))\C(n,k)=\M(0)(\C(n,k-a-b)+\C(n,k-a+b))\]
by Lemma \ref{two rows equivalence by M}. Thus, 
\[(\M_o(b)\M_m(a)-\M_m(a+b)-\M_m(a-b))\C(n,k)=0.\]
Since $\M_m(k)$ is the polynomial of minimum degree satisfying $\M_m(k)\C(n,k)=0$ and $\M_o(b)\M_m(a)-\M_m(a+b)-\M_m(a-b)$ is a polynomial of degree less than $k$, it follows that
\[\M_o(b)\M_m(a)-\M_m(a+b)-\M_m(a-b)=0,\]
as required. 
\end{proof}

Extending $\M'$ on negative numbers yields $\M'(-1)=0$. Using this fact, we can prove the following result.
\begin{theorem}\label{M_m(ab) and M'(ab)}
For any $a,b\geq1$ we have
\[\M_m(ab)=\M'(a-1)\parn{\M_o(b)}\M_m(b)-\M'(a-2)\parn{\M_o(b)}\M_m(0)\]
and
\[\M'(ab)=\M'(a-1)\parn{\M_o(b)}\M'(b)-\M'(a-2)\parn{\M_o(b)}\M'(0).\]
\end{theorem}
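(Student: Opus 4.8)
The plan is to fix $b\geq1$ throughout, abbreviate $y:=\M_o(b)$, and prove the two identities simultaneously by induction on $a$. The guiding observation is that, for fixed $b$, the quantities $\M_m(ab)$ and $\M'(ab)$ are sequences in $a$ obeying one and the same second-order linear recurrence whose coefficient is exactly $y=\M_o(b)$; the claimed right-hand sides are simply the closed-form solutions of that recurrence written through the polynomials $\M'(a-1)$ and $\M'(a-2)$. First I would produce the recurrence. Invoking Theorem \ref{M_o(b)M_m(a) and M_o(b)M'(a)} with its first index equal to $(a+1)b$ and its second index equal to $b$ (permissible since $(a+1)b\geq b\geq0$ for every $a\geq0$) gives
\[\M_o(b)\M_m((a+1)b)=\M_m((a+2)b)+\M_m(ab),\]
so that $g(a):=\M_m(ab)$ satisfies $g(a+2)=y\,g(a+1)-g(a)$ for all $a\geq0$; the second half of that theorem yields the very same recurrence for $\M'(ab)$.

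Next I would check that the proposed right-hand side satisfies this recurrence as well. Writing $h_n:=\M'(n)\parn{y}$ for the polynomial $\M'(n)$ evaluated at $\Delta=y$, the defining relation $\M'(n+2)=\Delta\M'(n+1)-\M'(n)$ becomes, after the substitution $\Delta=y$, the scalar recurrence $h_{n+1}=y\,h_n-h_{n-1}$. Consequently the fixed linear combination $G(a):=h_{a-1}\M_m(b)-h_{a-2}\M_m(0)$ inherits $G(a+2)=y\,G(a+1)-G(a)$, and the analogous combination with $\M_m$ replaced by $\M'$ behaves identically. Since $g$ and $G$ now satisfy the same second-order recurrence, it remains only to match two consecutive values. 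For $a=1$ I would use the extension $\M'(-1)=0$ together with $\M'(0)=1$, giving $G(1)=h_0\M_m(b)-h_{-1}\M_m(0)=\M_m(b)=\M_m(1\cdot b)$. For $a=2$ I would use $\M'(1)\parn{y}=y$ and the special case $c=b$ of Theorem \ref{M_o(b)M_m(a) and M_o(b)M'(a)}, namely $\M_o(b)\M_m(b)=\M_m(2b)+\M_m(0)$, to obtain $G(2)=y\,\M_m(b)-\M_m(0)=\M_m(2b)$. A straightforward induction then forces $G(a)=\M_m(ab)$ for all $a\geq1$, which is the first identity. The second identity follows verbatim by carrying every $\M_m$ over to $\M'$ and using $\M_o(b)\M'(b)=\M'(2b)+\M'(0)$ in the $a=2$ base case.

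I do not expect a genuine obstacle here, since once the recurrence in $a$ is isolated the remainder is a routine two-step induction; the only points requiring care are bookkeeping. The first is interpreting $\M'(a-1)\parn{\M_o(b)}$ correctly as the substitution $\Delta\mapsto\M_o(b)$ into the polynomial $\M'(a-1)$, and confirming that this substitution is exactly what turns the polynomial recurrence for $\M'$ into the scalar recurrence with coefficient $y$. The second is the base case $a=1$, where the term $\M'(a-2)\parn{\M_o(b)}=\M'(-1)\parn{\M_o(b)}$ must vanish; this is precisely why the convention $\M'(-1)=0$ recorded just before the statement is needed.
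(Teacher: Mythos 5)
Your proof is correct and is essentially the paper's own argument: the paper likewise proceeds by a two-step induction on $a$, with base cases $a=1$ (using $\M'(-1)=0$, $\M'(0)=1$) and $a=2$ (using $\M_o(b)\M_m(b)=\M_m(2b)+\M_m(0)$ from Theorem~\ref{M_o(b)M_m(a) and M_o(b)M'(a)}), and with the inductive step driven by applying $\M_o(b)$ and invoking that same theorem together with the defining recurrence of $\M'$ under the substitution $\Delta\mapsto\M_o(b)$. Your packaging of this as ``both sides satisfy the recurrence $x(a+2)=\M_o(b)\,x(a+1)-x(a)$ and agree at $a=1,2$'' is just the uniqueness-of-solutions form of the identical induction.
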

\begin{proof}
We prove only the first equality since the the proof of the second equality is similar. For $a=1$, we have 
\begin{align*}
\M_m(b)&=1\cdot\M_m(b)-0\cdot\M_m(0)\\
&=\M'(0)\parn{\M_o(b)}\M_m(b)-\M'(-1)\parn{\M_o(b)}\M_m(0).
\end{align*}
Also, for $a=2$, we obtain
\begin{align*}
\M_m(2b)+\M_m(0)&=\M_o(b)\M_m(b)
\end{align*}
by Theorem \ref{M_o(b)M_m(a) and M_o(b)M'(a)}, that is, 
\[\M_m(2b)=\M'(1)\parn{\M_o(b)}\M_o(b)-\M'(0)\parn{\M_o(b)}\M_m(0).\]
Now assume that $a\geq2$ and the result holds for $a$ and $a-1$. Applying the multiplier $\M_o(b)$ on both sides of 
\[\M_m(ab)=\M'(a-1)\parn{\M_o(b)}\M_m(b)-\M'(a-2)\parn{\M_o(b)}\M_m(0)\]
in conjunction with Theorem \ref{M_o(b)M_m(a) and M_o(b)M'(a)} yields
\begin{multline*}
\M_m((a+1)b)+\M_m((a-1)b)=\M_o(b)\M_m(ab)\\
=\M_o(b)\M'(a-1)\parn{\M_o(b)}\M_m(b)-\M_o(b)\M'(a-2)\parn{\M_o(b)}\M_m(0).
\end{multline*}
Since
\[\M_m((a-1)b)=\M'(a-2)\parn{\M_o(b)}\M_m(b)-\M'(a-3)\parn{\M_o(b)}\M_m(0),\]
it follows that
\begin{align*}
\M_m((a+1)b)=&(\Delta\M'(a-1)-\M'(a-2))\parn{\M_o(b)}\M_m(b)\\
&-(\Delta\M'(a-2)-\M'(a-3))\parn{\M_o(b)}\M_m(0)\\
=&\M'(a)\parn{\M_o(b)}\M_m(b)-\M'(a-1)\parn{\M_o(b)}\M_m(0),
\end{align*}
which is the result for $a+1$. The proof is complete.
\end{proof}

The above theorem results in a nice factorization formula for $\M_o$.
\begin{theorem}{(Factorization theorem for $\M_o$)}\label{M_o factorization}
For all $a,b\geq1$, we have
\[\M_o(ab)=\M_o(a)\circ\M_o(b)=\M_o(b)\circ\M_o(a).\]
As a result, if $n=p_1^{a_1}\ldots p_k^{a_k}$ is the canonical factorization of $n$ into distinct primes $p_1,\ldots,p_k$, then 
\[\M_o(n)=\M_o(p_1)^{a_1}\ldots\M_o(p_k)^{a_k},\]
where all the products are the combination of functions.
\end{theorem}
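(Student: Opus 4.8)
The plan is to read the theorem off Theorem~\ref{M_m(ab) and M'(ab)} once the composition symbol $\circ$ is given its correct meaning. Throughout, each $\M_o(n)$ and each $\M'(n)$ is a polynomial in the single operator $\Delta$, and I interpret $\M_o(a)\circ\M_o(b)$ as the result of substituting the operator $\M_o(b)$ for the variable $\Delta$ in the polynomial $\M_o(a)$ --- this is exactly the bracket notation $\M'(a-1)\parn{\M_o(b)}$ already in force in Theorem~\ref{M_m(ab) and M'(ab)}. Since the operators that are polynomials in $\Delta$ form a commutative ring, and since the identities below are identities in the \emph{single} variable $\Delta$, such a substitution is unambiguous and carries a polynomial identity to a valid operator identity; this is the one point I would make precise before anything else.

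The engine of the proof is a single auxiliary identity,
\[\M_o(a)=\Delta\,\M'(a-1)-2\M'(a-2)\qquad(a\geq1),\]
where the convention $\M'(-1)=0$ covers the case $a=1$. I would prove it by induction on $a$: the cases $a=1$ and $a=2$ are immediate from the initial values $\M_o(0)=2$, $\M_o(1)=\Delta$, $\M'(0)=1$, $\M'(1)=\Delta$, and the inductive step is a one-line manipulation using the common recurrence $f(n+2)=\Delta f(n+1)-f(n)$ satisfied by both $\M_o$ and $\M'$.

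With this in hand, I would substitute $\Delta\mapsto\M_o(b)$ into the auxiliary identity to obtain
\[\M_o(a)\parn{\M_o(b)}=\M_o(b)\,\M'(a-1)\parn{\M_o(b)}-2\M'(a-2)\parn{\M_o(b)}.\]
The left-hand side is by definition $\M_o(a)\circ\M_o(b)$, while the right-hand side is precisely the right-hand side of Theorem~\ref{M_m(ab) and M'(ab)} specialised to $m=1$ (so that $\M_m=\M_o$ and $\M_m(0)=2$), which equals $\M_o(ab)$. This yields $\M_o(ab)=\M_o(a)\circ\M_o(b)$ at once, and the second equality $\M_o(ab)=\M_o(b)\circ\M_o(a)$ follows from $ab=ba$.

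Finally, for the factorization statement I would induct on the number of prime factors of $n$ counted with multiplicity. Writing $n=p\cdot(n/p)$ for a prime $p\mid n$ and applying $\M_o(pm)=\M_o(p)\circ\M_o(m)$, the inductive hypothesis expresses $\M_o(n/p)$ as the required composition; associativity of composition (automatic for polynomial composition) lets me drop parentheses, and the commutativity just established lets me group equal primes, giving $\M_o(n)=\M_o(p_1)^{a_1}\circ\cdots\circ\M_o(p_k)^{a_k}$, with $\M_o(1)=\Delta$ serving as the identity for $\circ$ to match the empty product. The only genuinely delicate step is the very first one --- fixing the meaning of $\circ$ and justifying the substitution $\Delta\mapsto\M_o(b)$ --- after which the theorem is essentially a restatement of Theorem~\ref{M_m(ab) and M'(ab)}.
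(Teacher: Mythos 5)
Your proposal is correct and follows essentially the same route as the paper: both hinge on recognizing the right-hand side of Theorem~\ref{M_m(ab) and M'(ab)} (with $m$ odd) as the substitution $\M_o(a)\parn{\M_o(b)}$, via the identity $\M_o(a)=\Delta\M'(a-1)-2\M'(a-2)$, which the paper obtains as the case $b=1$ of Theorem~\ref{M_m(a+b) and M'(a+b)} and you reprove by a short induction. Your added care about the meaning of $\circ$ and the induction for the prime-power factorization only makes explicit what the paper leaves implicit.
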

\begin{proof}
Utilizing Theorems \ref{M_m(ab) and M'(ab)} and \ref{M_m(a+b) and M'(a+b)}, one observes that
\begin{align*}
\M_o(ab)&=\M'(a-1)\parn{\M_o(b)}\M_o(b)-\M'(a-2)\parn{\M_o(b)}\M_o(0)\\
&=\parn{\M'(a-1)\M_o(1)-\M'(a-2)\M_o(0)}(\M_o(b))\\
&=\M_o(a)(\M_o(b))=\M_o(a)\circ\M_o(b),
\end{align*}
from which the first equality follows. The second equality follows that of the first one by using induction on $n$.
\end{proof}

The multipliers $\M_e$ and $\M'$ are neither factorisable nor commuting as $\M_o$ was in the above theorem. However, a little modification of the argument gives uniform factorization formulas for all multipliers $\M_m$ and $\M'$ as follows: For any $m\geq1$ and prime $p$ we define the \textit{prime functions} $\F_{m,p}$ and $\F'_p$ on the set of multipliers $\{\M_m(n)\}$ and $\{\M'(n)\}$, respectively, as 
\[\F_{m,p}(\M_m(n))=\M'(p-1)(\M_e(n))\M_m(n)-\M'(p-2)(\M_e(n))\M_m(0)\]
and
\[\F'_p(\M'(n))=\M'(p-1)(\M_e(n))\M'(n)-\M'(p-2)(\M_e(n))\M'(0)\]
for all $n\geq1$. Using Theorem \ref{M_m(ab) and M'(ab)}, we can give a factorization of the multipliers $\M_m(n)$ and $\M'(n)$ into suitable prime functions.
\begin{theorem}{(Uniform factorization theorem)}
Let $n=p_1^{a_1}\ldots p_k^{a_k}$ be the canonical factorization of $n$ into distinct primes $p_1,\ldots,p_k$. Then 
\[\M_m(n)=\F_{m,p_1}^{a_1}\ldots\F_{m,p_k}^{a_k}\M_m(1)\]
and
\[\M'(n)={\F'}_{p_1}^{a_1}\ldots{\F'}_{p_k}^{a_k}\M'(1),\]
where all the products are the combination of functions.
\end{theorem}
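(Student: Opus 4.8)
The plan is to reduce the whole statement to one clean identity: each prime function advances the index of a multiplier by a factor equal to its prime, that is,
\[\F_{m,p}(\M_m(n))=\M_m(pn)\quad\text{and}\quad\F'_p(\M'(n))=\M'(pn)\]
for every prime $p$ and every $n\geq1$. Once these are available, the factorization is a bookkeeping induction. First I would establish the two displayed identities by comparing the defining expression of $\F_{m,p}$ (respectively $\F'_p$) with Theorem \ref{M_m(ab) and M'(ab)} specialized to $a=p$, $b=n$: the right-hand side of that theorem is precisely the expression defining the prime function, so the identity drops out at once. I would stress that primality is irrelevant to this step---the identity holds with any positive integer in place of $p$---and that it is invoked only at the primes $p_1,\dots,p_k$ occurring in the factorization of $n$.

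Next I would prove the factorization by induction on $\Omega(n)=a_1+\cdots+a_k$, the number of prime factors of $n$ counted with multiplicity (equivalently, by iterating the key identity). The base case $n=1$ is the trivial equality $\M_m(1)=\M_m(1)$ with the empty composition of prime functions. For the inductive step I read the product $\F_{m,p_1}^{a_1}\cdots\F_{m,p_k}^{a_k}$ applied to $\M_m(1)$ from right to left: applying $\F_{m,p_k}$ once sends $\M_m(1)$ to $\M_m(p_k)$, and repeating it $a_k$ times gives $\M_m(p_k^{a_k})$; more generally each further application of $\F_{m,p_j}$ to a multiplier $\M_m(d)$ returns $\M_m(p_j d)$ by the key identity. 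Composing the prime functions in the prescribed order therefore carries $\M_m(1)$ to $\M_m\parn{p_1^{a_1}\cdots p_k^{a_k}}=\M_m(n)$, which is the first assertion; the second follows verbatim with $\M'$ and $\F'_p$ in place of $\M_m$ and $\F_{m,p}$.

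Since the analytic content is carried entirely by Theorem \ref{M_m(ab) and M'(ab)}, I do not expect a genuine obstacle here; the argument is essentially organizational. The single point that demands care is the exact matching in the first step: one must check term by term that the defining expression of the prime function coincides with the $a=p$ instance of Theorem \ref{M_m(ab) and M'(ab)}---in particular that the inner substitution argument and the constant term $\M_m(0)$ line up---so that the key identity holds on the nose rather than only up to a correction of lower degree. After this verification, the iteration and the induction are routine.
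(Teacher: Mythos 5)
Your proposal is correct and follows the same route as the paper: the paper likewise obtains the key identities $\F_{m,p}(\M_m(n))=\M_m(pn)$ and $\F'_p(\M'(n))=\M'(pn)$ by specializing Theorem \ref{M_m(ab) and M'(ab)} to $a=p$, $b=n$, and then finishes by induction on $n$; your induction on the number of prime factors, reading the composition right to left, is just a more explicit organization of that same step. One remark on the term-by-term verification you rightly single out: as literally printed it does \emph{not} go through, because Theorem \ref{M_m(ab) and M'(ab)} has $\M_o(b)$ as the inner substitution while $\F_{m,p}$ and $\F'_p$ are defined with $\M_e(n)$ inside, and these differ --- for instance $\M_o(1)=\Delta$ but $\M_e(1)=\Delta-1$, and with $\M_e$ inside one would get $\F'_2(\M'(1))=(\Delta-1)\Delta-1=\Delta^2-\Delta-1\neq\Delta^2-1=\M'(2)$. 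This is a typo in the paper's definition of the prime functions (the theorem's form with $\M_o$ inside is the mathematically correct one), and the paper's own proof commits the same slip, quoting the theorem with $\M_e(n)$ in place of $\M_o(n)$; so your argument is exactly the paper's, and both are valid once the prime functions are read with $\M_o(\cdot)$ as the inner argument.
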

\begin{proof}
By Theorem \ref{M_m(ab) and M'(ab)}, we have
\begin{align*}
\M_m(pn)&=\M'(p-1)(\M_e(n))\M_m(n)-\M'(p-2)(\M_e(n))\M_m(0)\\
&=\F_{m,p}(\M_m(n))
\end{align*}
and
\begin{align*}
\M'(pn)&=\M'(p-1)(\M_e(n))\M'(n)-\M'(p-2)(\M_e(n))\M'(0)\\
&=\F'_p(\M'(n))
\end{align*}
for all $n\geq1$ and primes $p$. Hence, the result follows by induction on $n$.
\end{proof}

One notes that the multipliers $\M_m$ can be derived from $\M'$, which when combined with previous results, gives further properties of $\M'$.
\begin{lemma}\label{M_e and M_o by M'}
For all $n\geq1$, we have
\[\M_e(n)=\M'(n)-\M'(n-1)\]
and
\[\M_o(n)=\M_e(n)+\M_e(n-1)=\M'(n)-\M'(n-2).\]
\end{lemma}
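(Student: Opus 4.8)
The plan is to exploit the single structural fact that $\M'$, $\M_o$, and $\M_e$ are all solutions of the same second-order linear recurrence $f(n+2)=\Delta f(n+1)-f(n)$, differing only in their initial data $f(0),f(1)$. Two such solutions that agree at two consecutive indices coincide thereafter, and since the recurrence has constant (operator) coefficients its solution set is closed under the index shift $n\mapsto n\pm1$ and under linear combination; hence any difference or sum of shifted copies of a solution is again a solution. I would record at the outset the initial values $\M'(0)=1$, $\M'(1)=\Delta$, $\M_o(0)=2$, $\M_o(1)=\Delta$, $\M_e(0)=1$, $\M_e(1)=\Delta-1$, read off from the definitions via $\M_m(1)=\Delta-2+\M_m(0)$.

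For the first identity, set $g(n)=\M'(n)-\M'(n-1)$. Applying the $\M'$-recurrence to both $\M'(n+2)$ and $\M'(n+1)$ gives
\[g(n+2)=\Delta g(n+1)-g(n),\]
so $g$ solves the common recurrence for all $n\geq1$. It then suffices to match two consecutive values: $g(1)=\M'(1)-\M'(0)=\Delta-1=\M_e(1)$ and $g(2)=\M'(2)-\M'(1)=\Delta^2-\Delta-1=\M_e(2)$. By the uniqueness principle above, $g(n)=\M_e(n)$ for all $n\geq1$, which is the first claim.

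The first equality of the second claim is handled identically: writing $h(n)=\M_e(n)+\M_e(n-1)$, linearity shows $h$ solves the recurrence, and one checks $h(1)=\M_e(1)+\M_e(0)=\Delta=\M_o(1)$ and $h(2)=\M_e(2)+\M_e(1)=\Delta^2-2=\M_o(2)$, whence $h(n)=\M_o(n)$. The second equality $\M_e(n)+\M_e(n-1)=\M'(n)-\M'(n-2)$ is then immediate by telescoping: substituting the already-proved first identity for both $\M_e(n)=\M'(n)-\M'(n-1)$ and $\M_e(n-1)=\M'(n-1)-\M'(n-2)$ cancels the middle term. The only boundary point needing care is $n=1$ in this last step, where $\M_e(0)$ must be read as $\M'(0)-\M'(-1)$; this is exactly why the extension $\M'(-1)=0$ introduced before Theorem \ref{M_m(ab) and M'(ab)} is convenient, and with it the telescoping holds uniformly for all $n\geq1$.

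There is no genuine obstacle here: every step is routine once the common recurrence is isolated. The only things to watch are the bookkeeping of initial conditions — the three multipliers are distinguished solely by $f(0)$ and $f(1)$ — and the single boundary index $n=1$ in the telescoping identity, both of which are dispatched by the $\M'(-1)=0$ convention.
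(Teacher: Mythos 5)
Your proof is correct. There is, however, nothing in the paper to compare it against: the author states Lemma \ref{M_e and M_o by M'} with no proof at all, evidently regarding it as routine, so your write-up supplies the missing justification rather than an alternative to an existing one. The argument you give --- that $\M_o$, $\M_e$, $\M'$ and any shifts or linear combinations thereof satisfy the one second-order recurrence $f(n+2)=\Delta f(n+1)-f(n)$, so that two solutions agreeing at two consecutive indices coincide --- is surely the intended one, and your computations check out: $g(1)=\Delta-1=\M_e(1)$, $g(2)=\Delta^2-\Delta-1=\M_e(2)$, $h(1)=\Delta=\M_o(1)$, $h(2)=\Delta^2-2=\M_o(2)$, all consistent with $\M_o(0)=2$, $\M_e(0)=\M'(0)=1$, $\M_m(1)=\Delta-2+\M_m(0)$. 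Your handling of the boundary index $n=1$ in the telescoping step via the convention $\M'(-1)=0$ (which the paper introduces just before Theorem \ref{M_m(ab) and M'(ab)}, hence is available here, and which is the unique value consistent with running the recurrence backwards) closes the one spot where a careless argument would have an off-by-one gap.
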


The above lemma can be used to simplify Corollary \ref{I_m(n) by D(n,a) and M}.
\begin{corollary}\label{1+M_m(1)+...+M_m(n)}
For all $n\geq1$, we have
\[1+\M_m(1)+\cdots+\M_m(n)=\M'(n)+(\M_m(0)-1)\M'(n-1).\]
\end{corollary}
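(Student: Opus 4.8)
The plan is to split on the parity of $m$ and then exploit the telescoping identities of Lemma~\ref{M_e and M_o by M'}. Since $\M_m$ depends only on the parity of $m$, it suffices to verify the formula separately in the two cases. When $m$ is even we have $\M_m=\M_e$ and $\M_m(0)=1$, so the claimed right-hand side collapses to $\M'(n)$; when $m$ is odd we have $\M_m=\M_o$ and $\M_m(0)=2$, so it becomes $\M'(n)+\M'(n-1)$.

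For the even case I would use $\M_m(k)=\M_e(k)=\M'(k)-\M'(k-1)$, which exhibits the summand as a first difference. The sum $\sum_{k=1}^n\M_e(k)$ then telescopes to $\M'(n)-\M'(0)=\M'(n)-1$, and adding the leading $1$ yields exactly $\M'(n)$, matching the even right-hand side.

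For the odd case I would instead use $\M_m(k)=\M_o(k)=\M'(k)-\M'(k-2)$, a second difference. Now $\sum_{k=1}^n\M_o(k)$ telescopes with a gap of two, so that every interior term $\M'(1),\ldots,\M'(n-2)$ cancels and one is left with $\M'(n)+\M'(n-1)-\M'(0)-\M'(-1)$. Invoking $\M'(0)=1$ together with the extended value $\M'(-1)=0$ recorded just before Theorem~\ref{M_m(ab) and M'(ab)} reduces this to $\M'(n)+\M'(n-1)-1$, and the leading $1$ restores the desired $\M'(n)+\M'(n-1)$.

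I expect no genuine obstacle here; the only delicate point is the bookkeeping of the two boundary terms in the gap-two telescoping of the odd case, where one must correctly use the convention $\M'(-1)=0$ rather than trying to evaluate $\M'$ at a negative argument via its recurrence. As an alternative I could fold both parities into a single induction on $n$, checking $n=1$ and stepping from $n$ to $n+1$ through the defining recurrences for $\M_m$ and $\M'$, but the telescoping route is cleaner and makes the role of the constant $\M_m(0)$ transparent.
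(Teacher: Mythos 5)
Your proof is correct and takes essentially the paper's approach: both split on the parity of $m$ and telescope the sum using Lemma \ref{M_e and M_o by M'}, with the even case ($\M_e(k)=\M'(k)-\M'(k-1)$, yielding $\M'(n)$) handled identically. The only cosmetic difference is the odd case, where the paper writes $\M_o(k)=\M_e(k)+\M_e(k-1)$ and reduces to two copies of the even-case sum, while you telescope directly with the gap-two identity $\M_o(k)=\M'(k)-\M'(k-2)$ and the convention $\M'(-1)=0$; the two bookkeepings are equivalent.
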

\begin{proof}
Let $\M_m^*(n):=1+\M_m(1)+\cdots+\M_m(n)$. Since $\M_e(n)=\M'(n)-\M'(n-1)$ by Lemma \ref{M_e and M_o by M'}, it follows immediately that $\M_e^*(n)=\M'(n)$. Using Lemma \ref{M_e and M_o by M'} once more, we get $\M_o(n)=\M_e(n)+\M_e(n-1)$ so that
\begin{align*}
\M_o^*(n)&=1+\M_o(1)+\cdots+\M_o(n)\\
&=(1+\M_e(1)+\cdots+\M_e(n))+(\M_e(0)+\M_e(1)+\cdots+\M_e(n-1))\\
&=\M_e^*(n)+\M_e^*(n-1)\\
&=\M'(n)+\M'(n-1),
\end{align*}
as required.
\end{proof}
\begin{corollary}
Inside the $m\times\infty$ table, we have
\[\M_m(k-a)\I_m(n)=2(\M'(k-1)+(\M_m(0)-1)\M'(k-2))\C(n,a)\]
for all $a=1,\ldots,k$ and $n\geq1$, where $k=\ceil{m/2}$.
\end{corollary}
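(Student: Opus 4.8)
The plan is to recognize this corollary as nothing more than a closed-form simplification of Corollary~\ref{I_m(n) by D(n,a) and M}. That corollary already supplies
\[\M_m(k-a)\I_m(n)=2(\M_m(k-1)+\cdots+\M_m(1)+1)\C(n,a),\]
so the entire task reduces to rewriting the scalar factor $\M_m(k-1)+\cdots+\M_m(1)+1$ sitting inside the right-hand side in terms of the multipliers $\M'$.

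First I would observe that this factor is precisely the partial-sum quantity $\M_m^*(k-1):=1+\M_m(1)+\cdots+\M_m(k-1)$ that was analyzed in Corollary~\ref{1+M_m(1)+...+M_m(n)}. Invoking that corollary with $n$ replaced by $k-1$ yields
\[1+\M_m(1)+\cdots+\M_m(k-1)=\M'(k-1)+(\M_m(0)-1)\M'(k-2).\]
Substituting this identity back into the factor coming from Corollary~\ref{I_m(n) by D(n,a) and M} then produces
\[\M_m(k-a)\I_m(n)=2(\M'(k-1)+(\M_m(0)-1)\M'(k-2))\C(n,a)\]
for all $a=1,\ldots,k$ and $n\geq1$, which is exactly the asserted formula.

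There is essentially no obstacle in this argument, since it is a purely formal concatenation of two previously established identities; no induction, no operator manipulation, and no appeal to minimality of a recurrence is required. The only point deserving a moment's care is the bookkeeping of indices, namely making sure that the top term of the telescoped sum in Corollary~\ref{I_m(n) by D(n,a) and M} is $\M_m(k-1)$ and that the matching argument fed into Corollary~\ref{1+M_m(1)+...+M_m(n)} is therefore $k-1$ (rather than $k$ or $k-2$). Once this index is pinned down, the two factors agree verbatim and the substitution completes the proof.
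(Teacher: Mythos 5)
Your proof is correct and is exactly what the paper intends: this corollary is stated without proof precisely because it is the immediate concatenation of Corollary~\ref{I_m(n) by D(n,a) and M} with Corollary~\ref{1+M_m(1)+...+M_m(n)} evaluated at $n=k-1$, which is what you do. The only pedantic caveat is the case $k=1$, where Corollary~\ref{1+M_m(1)+...+M_m(n)} is invoked at $n=0$, outside its stated range $n\geq1$; this is harmless given the paper's later convention $\M'(-1)=0$, under which the identity $1=\M'(0)+(\M_m(0)-1)\M'(-1)$ still holds.
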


We conclude this section with providing the precise formulas for the multipliers $\M_m$ and $\M'$.
\begin{theorem}
For all $n\geq1$, we have
\begin{align*}
\M_o(n)&=\sum_{i=0}^{\floor{\frac{n}{2}}}(-1)^i\bracket{\binom{n+1-i}{i}-\binom{n-1-i}{i-2}}\Delta^{n-2i},\\
\M_e(n)&=\sum_{i=0}^n(-1)^{\ceil{\frac{i}{2}}}\binom{n-\ceil{\frac{i}{2}}}{\floor{\frac{i}{2}}}\Delta^{n-i},\\
\M'(n)&=\sum_{i=0}^{\floor{\frac{n}{2}}}(-1)^i\binom{n-i}{i}\Delta^{n-2i}.
\end{align*}
\end{theorem}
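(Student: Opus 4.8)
The plan is to prove the formula for $\M'(n)$ first and then deduce the other two from Lemma \ref{M_e and M_o by M'}, which already expresses $\M_e$ and $\M_o$ in terms of $\M'$. All three multipliers obey the same second-order recurrence $f(n+2)=\Delta f(n+1)-f(n)$ and differ only in their initial values, so the closed forms are Chebyshev-type expansions in $\Delta$; the cleanest route is a single induction for $\M'$ powered by Pascal's rule, after which $\M_e$ and $\M_o$ require no fresh recurrence argument.

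First I would verify the $\M'$ formula for $n=0,1$ from $\M'(0)=1$ and $\M'(1)=\Delta$, and then run an induction using $\M'(n+2)=\Delta\M'(n+1)-\M'(n)$. Writing $S(n):=\sum_{i=0}^{\floor{n/2}}(-1)^i\binom{n-i}{i}\Delta^{n-2i}$, multiplication of $S(n+1)$ by $\Delta$ shifts every exponent up by one, while re-indexing $S(n)$ so that its generic power becomes $\Delta^{n+2-2i}$ turns its coefficient into $(-1)^{i-1}\binom{n+1-i}{i-1}$. Collecting the coefficient of $\Delta^{n+2-2i}$ in $\Delta S(n+1)-S(n)$ then gives $(-1)^i\bracket{\binom{n+1-i}{i}+\binom{n+1-i}{i-1}}=(-1)^i\binom{n+2-i}{i}$ by Pascal's rule, which is precisely the coefficient appearing in $S(n+2)$; a short check of the top and bottom index boundaries completes the step.

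Next I would obtain $\M_e$ from the identity $\M_e(n)=\M'(n)-\M'(n-1)$ of Lemma \ref{M_e and M_o by M'}. The key observation is that $\M'(n)$ contributes only the even gaps $\Delta^{n-2i}$ and $-\M'(n-1)$ only the odd gaps $\Delta^{n-(2j+1)}$, so the two sums interleave into one sum over all exponents $\Delta^{n-i}$ with $i=0,\dots,n$. For $i=2i'$ even the coefficient is $(-1)^{i'}\binom{n-i'}{i'}$, and for $i=2j+1$ odd it is $(-1)^{j+1}\binom{n-1-j}{j}$; substituting $i'=\ceil{i/2}=\floor{i/2}$ in the even case and $j+1=\ceil{i/2}$, $j=\floor{i/2}$ in the odd case collapses both into $(-1)^{\ceil{i/2}}\binom{n-\ceil{i/2}}{\floor{i/2}}$, as claimed.

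Finally, $\M_o$ follows from $\M_o(n)=\M'(n)-\M'(n-2)$ (again Lemma \ref{M_e and M_o by M'}). Here both summands carry exponents of the same parity, so after re-indexing $\M'(n-2)$ to align powers the coefficient of $\Delta^{n-2i}$ becomes $(-1)^i\bracket{\binom{n-i}{i}+\binom{n-1-i}{i-1}}$. The only genuine computation is to reconcile this with the stated form: two applications of Pascal's rule give $\binom{n+1-i}{i}=\binom{n-i}{i}+\binom{n-1-i}{i-1}+\binom{n-1-i}{i-2}$, whence $\binom{n-i}{i}+\binom{n-1-i}{i-1}=\binom{n+1-i}{i}-\binom{n-1-i}{i-2}$, exactly the bracketed expression in the theorem. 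I expect the main obstacle to be purely clerical, namely keeping the summation ranges and the floor/ceiling indices consistent through the re-indexings (including the boundary terms where a binomial coefficient silently vanishes), rather than anything conceptual, since Pascal's rule carries the entire weight of the argument.
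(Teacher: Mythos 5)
Your proposal is correct, and in fact it supplies something the paper does not have: the paper states this theorem with no proof at all (it is followed only by the remark identifying $\M_o$, $\M_e$, $\M'$ with Chebyshev, Fibonacci, and Lucas polynomials), so there is no authorial argument to compare against. Your route is sound: the induction for $\M'$ via Pascal's rule goes through, including the boundary terms (for $n=2m$ the top coefficient of $S(n+2)$ comes entirely from $-S(n)$, consistent with $\binom{m}{m+1}=0$); the interleaving argument for $\M_e(n)=\M'(n)-\M'(n-1)$ correctly matches $(-1)^{\ceil{i/2}}\binom{n-\ceil{i/2}}{\floor{i/2}}$ in both parities; and the double application of Pascal's rule reconciling $\binom{n-i}{i}+\binom{n-1-i}{i-1}$ with $\binom{n+1-i}{i}-\binom{n-1-i}{i-2}$ is exactly right. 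Two small remarks. First, your argument leans on Lemma \ref{M_e and M_o by M'}, which the paper also states without proof; for a self-contained write-up you should add the one-line verification that $\M'(n)-\M'(n-1)$ and $\M'(n)-\M'(n-2)$ satisfy the recurrence $f(n+2)=\Delta f(n+1)-f(n)$ with the initial values $\M_e(0),\M_e(1)$ and $\M_o(0),\M_o(1)$ respectively (using the paper's convention $\M'(-1)=0$ for the $n=1$ case of $\M_o$). Second, an equally short alternative that avoids the lemma entirely is to run the same Pascal-rule induction three times, once per family, since all three multipliers obey the identical recurrence and differ only in initial conditions; your transfer argument is slightly more economical and makes the structural relation among the three polynomials explicit, which is in the spirit of Section 3.
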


It is worthwhile to mention that the polynomials $(1/2)\M_o(n)(2x)$ coincide with the Chebyshev polynomials of the first kind, and that the polynomials obtained from substituting coefficients of $\M_e(n)(x)$ and $\M'(n)(x)$ by their absolute values are indeed the Fibonacci polynomials $F_{n+1}(x)$ and Lucas polynomials $L_{n+1}(x)$, respectively.
\section{Singer cycles}
Cyclic subgroups of $GL(n,q)$ of order $q^n-1$ are known as \textit{Singer cycles}, where $GL(n,q)$ denotes the group of all invertible $n\times n$ matrices over the field $\F_q$ with $q$ elements. While it is easy to show the existence of Singer cycles, there is no direct formula to generate them. Our computations show that some of the invertible matrices $\E_n$ and $\O_n$, regarded as matrices with entries in $GF(q)$, have orders $q^n-1$ giving rise to Singer cycles in $GL(n,q)$. So the following question and conjectures arise naturally:
\begin{question}
For which values of $n$ and $q$ the invertible matrix $\E_n\in GL_n(q)$ has order $q^n-1$?
\end{question}
\begin{conjecture}
The matrix $\E_n\in GL_n(q)$ has order $q^n-1$ only if $q=2$ or $3$.
\end{conjecture}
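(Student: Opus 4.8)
The plan is to convert the order condition into a statement about the characteristic polynomial of $\E_n$ and then exploit the very restricted value of $\det\E_n$ supplied by Lemma \ref{det(T^*_m)}. Recall the standard description of Singer cycles: a matrix $M\in GL_n(q)$ has order $q^n-1$ if and only if it generates a Singer cycle, equivalently its characteristic polynomial is irreducible of degree $n$ over $\F_q$ and a root $\alpha$ is a \emph{primitive} element of $\F_{q^n}^*$. First I would record, via the even case of Corollary \ref{equivalent minimal polynomials}, that
\[\det(xI-\E_n)=\M_e(n)(x-1),\]
so the eigenvalues of $\E_n$ are the shifts $1+\delta$ of the roots $\delta$ of $\M_e(n)$; and, via Lemma \ref{det(T^*_m)}, that $\det\E_n\in\{1,-1,2,-2\}$, with $\det\E_n=\pm2$ precisely when $n\equiv1\pmod3$.

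The engine of the argument is the norm. If $\E_n$ generates a Singer cycle with primitive root $\alpha$, then its eigenvalues are the conjugates $\alpha,\alpha^q,\ldots,\alpha^{q^{n-1}}$, whence
\[\det\E_n=\prod_{i=0}^{n-1}\alpha^{q^i}=N_{\F_{q^n}/\F_q}(\alpha)=\alpha^{(q^n-1)/(q-1)}.\]
Since $\alpha$ is primitive this norm generates $\F_q^*$, so it has order exactly $q-1$; but $\det\E_n$ reduces into the prime subfield $\F_p$, so its order also divides $p-1$. Hence $q-1\mid p-1$, which forces $q=p$ to be prime and disposes of every proper prime power at once. With $q=p$ prime I would then split on the value of $\det\E_n$: when $n\not\equiv1\pmod3$ we have $\det\E_n=\pm1$, an element of order at most $2$, so it generates $\F_p^*$ only if $p-1\leq2$, i.e. $p\in\{2,3\}$. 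This settles the conjecture completely for all $n\not\equiv1\pmod3$.

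The residual --- and genuinely hard --- case is $n\equiv1\pmod3$ with $p\geq5$, where $\det\E_n=\pm2$ can itself be a primitive root modulo $p$ (already at $p=5$ both $2$ and $3$ are primitive roots), so the norm test is silent. Here one must show directly that $\det(xI-\E_n)=\M_e(n)(x-1)$ is never a primitive polynomial over $\F_p$. The natural avenue is the cyclotomic structure exposed by the Fibonacci/Chebyshev identification noted after the factorization theorems: the roots $\delta$ of $\M_e(n)$ are, up to the sign normalization relating $|\M_e(n)|$ to $F_{n+1}$, of the form $2\cos(k\pi/(n+1))$ and hence lie in $\mathbb{Q}(\zeta_{2(n+1)})$. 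I would use this to control the factorization type of $\det(xI-\E_n)$ modulo $p$ through the order of $p$ in $(\Z/2(n+1)\Z)^\times$, aiming either to exhibit a proper $\F_p$-factor (so $\E_n$ is not even irreducible) or to produce a multiplicative relation among the conjugates $1+\delta$ that forces the order of $\alpha$ into a proper divisor of $p^n-1$.

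I expect this last step to be the main obstacle, and I do not expect it to close uniformly by elementary means: primitivity of a fixed family of polynomials across all residues $p\geq5$ is entangled with Artin-type primitive-root behaviour, so a complete proof will likely combine the cyclotomic factorization above with a separate, possibly conditional or computational, treatment. A necessary bookkeeping point is that the smallest cases must be quarantined --- for instance $\E_1=[2]$ literally generates $\F_5^*$ and so already has order $q^n-1$ at $q=5$ --- so the statement should be read for $n$ beyond a small bound; the argument above already pins any remaining counterexample down to $n\equiv1\pmod3$ and $q=p\geq5$ prime.
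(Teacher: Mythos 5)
The statement you were asked to prove is posed in the paper as a \emph{conjecture}: the authors offer no proof, only the computational table of values $n\leq 500$ for $q=2,3$, so there is no proof of theirs to compare yours against. Measured on its own terms, your proposal makes correct and genuine partial progress but is not, and honestly does not claim to be, a complete proof. The reduction is sound: an element of $GL_n(q)$ of order $q^n-1$ has primitive, hence irreducible, characteristic polynomial of degree $n$; by Corollary \ref{equivalent minimal polynomials} (even case) this polynomial is $\M_e(n)(x-1)$; by Lemma \ref{det(T^*_m)} one has $\det\E_n\in\{\pm1,\pm2\}$, with $\pm2$ exactly when $n\equiv1\pmod 3$; and since $N_{\mathbb{F}_{q^n}/\mathbb{F}_q}(\alpha)=\alpha^{(q^n-1)/(q-1)}$ has order exactly $q-1$ while $\det\E_n$ lies in the prime subfield, $q-1\mid p-1$ forces $q=p$ prime, and $n\not\equiv1\pmod 3$ then forces $p\leq3$. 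Your caveat about small cases is also a real one: $\E_1=[2]$ has order $q^n-1$ whenever $2$ is a primitive root modulo $p$ (already at $q=5$), so the conjecture as literally stated needs $n\geq2$.

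The genuine gap is the case you flag yourself: $q=p\geq5$ with $n\equiv1\pmod3$, where the norm test is vacuous and nothing rules out $\M_e(n)(x-1)$ being primitive modulo $p$. Moreover, the sketch you offer for that case starts from a wrong cyclotomic datum: the roots $2\cos(k\pi/(n+1))$ in $\mathbb{Q}(\zeta_{2(n+1)})$ belong to the $\M'$ (Chebyshev-$U$/Fibonacci) side, not to $\M_e$. From $\M_e(0)=1$, $\M_e(1)=\Delta-1$, and $\M_e(n+2)=\Delta\M_e(n+1)-\M_e(n)$ one computes
\[\M_e(n)(2\cos\theta)=\frac{\cos\parn{\frac{(2n+1)\theta}{2}}}{\cos\parn{\frac{\theta}{2}}},\]
so the roots of $\M_e(n)$ are $2\cos\frac{(2k+1)\pi}{2n+1}$, $k=0,\ldots,n-1$, and the eigenvalues of $\E_n$ are $1+2\cos\frac{(2k+1)\pi}{2n+1}$, lying in the real subfield of $\mathbb{Q}(\zeta_{4n+2})$, not of $\mathbb{Q}(\zeta_{2(n+1)})$. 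With the corrected modulus your plan does yield something nontrivial: that real subfield has degree $\phi(2n+1)/2\leq n$ over $\mathbb{Q}$, so a degree-$n$ irreducible factor modulo $p$ can exist only if $\phi(2n+1)=2n$, i.e.\ only if $2n+1$ is prime --- a prediction consistent with the paper's table, where every listed $n$ (for instance $2,3,5,9,11,14$ giving $5,7,11,19,23,29$) has $2n+1$ prime. But beyond this necessary condition, primitivity of the fixed family $\M_e(n)(x-1)$ across all primes $p\geq5$ is an Artin-type primitive-root question, and neither your sketch nor the paper closes it; the statement remains open.
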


The following table shows all $n\leq 500$ for which $\E_n$ has order $q^n-1$ when $q=2$ or $3$.

\begin{center}
Values of $n$ ($\leq500$) for which $\E_n$ has order $q^n-1$
\begin{tabular}{|c|c|}
\hline
$q$&$n$\\
\hline
$2$&$2$, $3$, $5$, $9$, $11$, $14$, $23$, $26$, $29$, $35$, $39$, $41$, $53$, $65$, $69$, $74$, $81$, $83$, $86$, $89$, $95$,\\
&$105$, $113$, $119$, $131$, $146$, $155$, $158$, $173$, $179$, $189$, $191$, $209$, $221$, $230$, $231$,\\
&$233$, $239$, $243$, $251$, $254$, $281$, $293$, $299$, $303$, $323$, $326$, $329$, $359$, $371$, $375$,\\
&$386$, $398$, $411$, $413$, $419$, $429$, $431$, $443$, $453$, $470$, $473$, $491$\\
\hline
$3$&$3$, $5$, $9$, $11$, $23$, $29$, $35$, $39$, $41$, $53$, $65$, $69$, $81$, $83$, $89$, $95$, $99$, $105$, $113$, $119$,\\
&$131$, $155$, $173$, $179$, $189$, $191$, $209$, $221$, $231$, $233$, $239$, $243$, $251$, $281$, $293$,\\
&$299$, $303$, $323$, $329$, $359$, $371$, $375$, $411$, $413$, $419$, $429$, $431$, $443$, $453$, $491$\\
\hline
\end{tabular}
\end{center}

For matrices $\O_n$ we pose the following conjecture determining all those matrices giving rise to Singer cycles.
\begin{conjecture}
The matrix $\O_n\in GL_n(q)$ is invertible of order $q^n-1$ if and only if $q=3$ and $n$ is a power of two.
\end{conjecture}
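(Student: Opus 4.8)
The plan is to recast the order condition as a primitivity statement about the characteristic polynomial of $\O_n$ and then analyse that polynomial through its Chebyshev/cyclotomic structure. First I would record the standard fact that a matrix $A\in GL_n(q)$ is invertible of order $q^n-1$ if and only if its characteristic polynomial is a \emph{primitive} polynomial of degree $n$ over $\F_q$: if $A$ has order $q^n-1$, the commutative algebra $\F_q[A]$ contains a cyclic group of that size, which forces $\F_q[A]\cong\F_{q^n}$ and hence makes $A$ a primitive element with irreducible (and primitive) characteristic polynomial; conversely the companion matrix of a primitive polynomial realises multiplication by a primitive element of $\F_{q^n}$, and $\O_n$ is conjugate to it once the characteristic polynomial is irreducible. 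By Corollary \ref{equivalent minimal polynomials} the characteristic polynomial of $\O_n$ is $\M_o(n)(x-1)$, and by the Chebyshev identity recorded after the multiplier formulas, $\frac12\M_o(n)(2x)=T_n(x)$, so under the substitution $x-1=w+w^{-1}$ one has $\M_o(n)(x-1)=w^n+w^{-n}$. Thus the problem becomes: $\M_o(n)(x-1)$ is primitive over $\F_q$ if and only if $q=3$ and $n$ is a power of two, and its roots are exactly $\alpha=1+w+w^{-1}$ with $w^{2n}=-1$.

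For the necessity of $n$ being a power of two I would invoke the Factorization Theorem for $\M_o$ (Theorem \ref{M_o factorization}). If $n=m\ell$ with $\ell$ an odd prime, then $\M_o(n)=\M_o(\ell)\circ\M_o(m)$, whence $\M_o(n)(x-1)=\M_o(\ell)\bigl(\M_o(m)(x-1)\bigr)$. Since $\ell$ is odd, the Chebyshev relation gives $\M_o(\ell)(0)=2T_\ell(0)=0$, so $\M_o(\ell)(Y)=Y\,g(Y)$ for some polynomial $g$; substituting $Y=\M_o(m)(x-1)$ shows that $\M_o(m)(x-1)$ divides $\M_o(n)(x-1)$. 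As $\M_o(m)(x-1)$ is monic of degree $m<n$, the characteristic polynomial is reducible over every $\F_q$, so $\O_n$ cannot have order $q^n-1$; hence $n=2^j$. Next I would eliminate characteristic two: since $\M_o(2)(\Delta)=\Delta^2-2\equiv\Delta^2$ modulo $2$ and $\M_o(2^j)=\M_o(2)^{\circ j}$ by Theorem \ref{M_o factorization}, we get $\M_o(n)(x-1)\equiv(x-1)^n$ over any field of characteristic two, so $\O_n$ is unipotent and has $2$-power order, never the odd number $q^n-1$ (for $n\ge2$). Thus $q$ is a power of an odd prime and $w$ runs over the primitive $2^{j+2}$-th roots of unity.

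With the characteristic odd, Frobenius acts by $\alpha\mapsto\alpha^q$, i.e.\ $w\mapsto w^q$, and irreducibility of $\M_o(n)(x-1)$ is equivalent to transitivity of $\langle q\rangle$ on the $n$ pairs $\{w,w^{-1}\}$ inside $(\Z/2^{j+2}\Z)^\times$. A short computation with the lifting-the-exponent identity $v_2(q^{2^i}-1)=v_2(q^2-1)+i-1$ shows that the order of $q$ modulo $2^{j+2}$ equals $2^j$, and that $-1\notin\langle q\rangle$, precisely when $v_2(q^2-1)=3$, i.e.\ $q\equiv3,5\pmod8$; in that case the two Frobenius orbits of the $2n$ relevant roots of unity are interchanged by inversion, so they fuse into a single orbit on the $n$ roots $\alpha$ and the polynomial is irreducible of degree $2^j$. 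This pins down the admissible characteristics but does not yet isolate $q=3$, since irreducibility alone also holds for $q\equiv5\pmod8$.

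The decisive and hardest input is primitivity, and this is where characteristic three enters. For $q=3$ and $n=2^j$, the same identity gives $v_2(3^n-1)=j+2$, so $\langle w\rangle$ is exactly the $2$-Sylow subgroup of $\F_{3^n}^\times$. Here one has the miracle $1+w+w^{-1}=w^{-1}(w-1)^2$, valid because $w^3-1=(w-1)^3$ in characteristic three. Since $w$ generates the $2$-Sylow subgroup it is an odd power of a generator of $\F_{3^n}^\times$, hence a non-square, while $(w-1)^2$ is a square; therefore $\alpha=w^{-1}(w-1)^2$ is a non-square, and so the $2$-part of its multiplicative order is the full $2^{j+2}=v_2(3^n-1)$. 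Thus $\alpha$ automatically carries the correct $2$-part of the order. What remains---and what I expect to be the genuine obstacle---is the odd part: one must show that for every odd prime $r$ dividing $3^n-1$ the element $\alpha$ is not an $r$-th power in $\F_{3^n}^\times$, so that $\alpha$ is a primitive root, and dually that for $q\equiv5\pmod8$ (and every other irreducible case) primitivity fails. Controlling the exact multiplicative order of the explicit family $\alpha=w^{-1}(w-1)^2$ uniformly in $j$ is an Artin-type primitive-root problem for which no unconditional method is presently available; a complete proof would likely require either $\mathrm{GRH}$-conditional analytic input or an as-yet-unidentified algebraic description of $\alpha$. The structural steps above---the forcing of $n=2^j$, the elimination of characteristic two, the irreducibility criterion, and the exact determination of the $2$-part of the order in characteristic three---are within reach, but the primitivity of $\alpha$ is the crux that keeps the statement at the level of a conjecture.
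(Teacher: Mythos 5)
The statement you set out to prove is not proved in the paper at all: it appears there as a conjecture (alongside the analogous conjecture for $\E_n$), supported only by machine computation, so there is no proof of record to compare yours against. Your structural reductions check out: the equivalence of ``order $q^n-1$'' with primitivity of the characteristic polynomial is standard; by Corollary \ref{equivalent minimal polynomials} that polynomial is $\M_o(n)(x-1)$, and the Chebyshev substitution $x-1=w+w^{-1}$ turning it into $w^n+w^{-n}$ is correct; Theorem \ref{M_o factorization} together with $\M_o(\ell)(0)=0$ for odd $\ell$ does force reducibility unless $n$ is a power of two; $\M_o(2^j)(\Delta)\equiv\Delta^{2^j}\pmod 2$ does make $\O_n$ unipotent in characteristic two; the transitivity analysis in $(\Z/2^{j+2}\Z)^\times$ correctly isolates $q\equiv3,5\pmod 8$ as the irreducible cases; and the identity $\alpha=w^{-1}(w-1)^2$ in characteristic three, with $\langle w\rangle$ the full $2$-Sylow subgroup of $\mathbb{F}_{3^n}^\times$, correctly pins the $2$-part of the order of $\alpha$ at $2^{j+2}=v_2(3^n-1)$. (One pedantic point: as literally stated the conjecture fails at $n=1$, since $\O_1=[1]$ has order $1=q-1$ exactly when $q=2$ and never when $q=3$; the intended reading is surely $n\geq2$, which your characteristic-two step already tacitly assumes.)

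But, as you yourself say plainly, this is not a proof, and the gap is genuine and two-sided. For sufficiency you must show that $\alpha=w^{-1}(w-1)^2$ is a non-$r$-th power in $\mathbb{F}_{3^{2^j}}^\times$ for \emph{every} odd prime $r\mid 3^{2^j}-1$ and every $j$; and for necessity you must show primitivity fails for every other $q$ in the irreducible regime $q\equiv3,5\pmod8$ --- not only $q\equiv5\pmod8$, which you mention, but also primes $q=11,19,\ldots$ and prime powers such as $q=27\equiv3\pmod8$, none of which your argument touches. Both halves are Artin-type questions about the exact multiplicative order of an explicit algebraic family, for which no unconditional technique is known; your reduction makes this precise and in fact explains why the paper leaves the statement as a conjecture, but it does not close it. What you have is a correct and genuinely useful reduction (primitivity of the characteristic polynomial, $n=2^j$ forced, the characteristic and residue class pinned down, the $2$-part of the order resolved), not a proof, and your own final assessment of where the obstruction lies is accurate.
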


\end{document}